\newcommand*{\boxwedge}{%
  \mathbin{%
    \mathpalette\@boxwedge{}%
  }%
}
\newcommand*{\@boxwedge}[2]{%
  \sbox0{$#1\boxplus\m@th$}%
  \dimen2=.5\dimexpr\wd0-\ht0-\dp0\relax 
  \dimen@=\dimexpr\ht0+\dp0\relax
  \def\lw{.06}
  \kern\dimen2 
  \tikz[
    line width=\lw\dimen@,
    line join=round,
    x=\dimen@,
    y=\dimen@,
  ]
  \draw
    (\lw/2,0) rectangle (1-\lw,1-\lw)
    (\lw,0) -- (.5,1-\lw-\lw/2) -- (1-\lw-\lw/2 ,0)
  ;%
  \kern\dimen2 
}
\theoremstyle{plain}
\newtheorem{theorem}{Theorem}[section]
\newtheorem{prop}[theorem]{Proposition}
\newtheorem{lemma}[theorem]{Lemma}
\newtheorem{coro}[theorem]{Corollary}
\newtheorem{assumption}{Assumption}
\theoremstyle{definition}
\newtheorem{remark}[theorem]{Remark}
\newtheorem{example}[theorem]{Example}
\newtheorem{definition}[theorem]{Definition}
\newcommand{\ts}{\hspace{0.5pt}}
\newcommand{\RR}{\mathbb{R}\ts}
\newcommand{\PP}{\mathbb{P}\ts}
\newcommand{\NN}{\mathbb{N}}
\newcommand{\EE}{\mathbb{E}}
\newcommand{\cA}{\mathcal{A}}
\newcommand{\cB}{\mathcal{B}}
\newcommand{\cC}{\mathcal{C}}
\newcommand{\cD}{\mathcal{D}}
\newcommand{\cE}{\mathcal{E}}
\newcommand{\cG}{\mathcal{G}}
\newcommand{\cL}{\mathcal{L}}
\newcommand{\cO}{\mathcal{O}}
\newcommand{\cP}{\mathcal{P}}
\newcommand{\cQ}{\mathcal{Q}}
\newcommand{\cR}{\mathcal{R}}
\newcommand{\one}{\mathbbm{1}}
\newcommand{\bP}{{\textbf{P}}}
\newcommand{\ee}{\mathrm{e}}
\newcommand{\dd}{\, \mathrm{d}}
\newcommand{\psimut}{{\psi_{\textnormal{mut}}^{}}}
\newcommand{\psisel}{{\psi_{\textnormal{sel}}^{}}}
\definecolor{gre}{rgb}{.06,.49,0.03} 
\DeclareMathOperator{\id}{Id}
\newcommand{\defeq}{\mathrel{\mathop:}=}
\newcommand{\eqdef}{=\mathrel{\mathop:}}
\tikzset{
    cross/.pic = {
    \draw[rotate = 45] (-#1,0) -- (#1,0);
    \draw[rotate = 45] (0,-#1) -- (0, #1);
    }
}
\begin{document}

\title[Recombination and more] {Labelled partitions in action: recombination, selection, mutation, and more}

\author{F. Alberti}
\address[F. Alberti]{Arbeitsgruppe Stochastik, Johannes Gutenberg University Mainz, \newline
\hspace*{\parindent}55122 Mainz, Germany}
\email{fralbert@uni-mainz.de}

\begin{abstract}
In this paper, we consider the evolution of an (infinitely large) population under recombination and additional evolutionary forces, modelled by a measure-valued ordinary differential equation. We provide a stochastic representation for the solution of this model via duality to a new labelled partitioning process with Markovian labels. In the special case of single-crossover, this leads to a recursive solution formula. This extends (and unifies) previous results on the selection-recombination equation. As a concrete example, we consider the selection-mutation-recombination equation. 
\end{abstract}

\maketitle

\noindent \emph{Keywords:} selection-recombination-mutation differential equation; duality; population genetics; partitioning process; Feller processes

\bigskip

\noindent \emph{MSC:} 
34A05, 34A34, 92D15, 60J25

\section{Introduction}
Models that describe the effect of genetic recombination have long been among the big challenges for mathematical population geneticists. While the first statement of the deterministic recombination equation dates back over a century~\cite{Jennings,Robbins}, an explicit solution has long seemed out of reach. To tame the notorious non-linearity of this system of equations, a key idea, originating from the study of \emph{genetic algebras}~\cite{Lyubich,HaleRingwood}, was to embed the solution of the recombination equation into a higher dimensional space in which it solves a \emph{linear equation}. However, it was not until recently that this idea, known as \emph{Haldane linearisation}, was succesfully applied to obtain an explicit solution~\cite{haldane,BaakeBaakeSalamat}. The crucial new insight was to exploit the connection between the nonlinear evolution forward in time and a Markov process describing individual lines of descent \emph{backward} in time. The solution of the nonlinear recombination equation can be expressed in terms of the law of this process, which satisfies a linear equation. This way of thinking is strongly
related to the \emph{ancestral recombination graph}~\cite{hudson,griffithsmarjoram96,griffithsmarjoram97,JenkinsFearnheadSong,BhaskarSong,LambertSchertzer}, see also \cite[Ch.~3.4]{durrett}.

Having obtained an explicit solution of the recombination equation, the logical next step was to consider the interplay with other evolutionary forces. In~\cite{SelrekDocumenta}, we solved the \emph{selection-recombination equation} for a single selected site and single-crossover, by combining the aforementioned ancestral recombination graph with the \emph{ancestral selection graph}~\cite{KroneNeuhauser97} into the \emph{ancestral selection-recombination graph} (ASRG). Here, single-crossover means that, upon reproduction, the parental sequences cross over at exactly one sequence position, and the subsequences to either side of the crossover point are recombined in the offspring individual. This, together with the uniqueness of the selective site induces a natural order (see Section~\ref{sec:singlecrossover}) on the set of sequence sites which allowed us to distill the rather complicated ASRG into three distinct, simpler dual processes; namely, a weighted partitioning process, a family of Yule processes with initiation and resetting (YPIR), and a family of \emph{initiation processes}, each of them providing different insight into the genealogical structure of a sample. In the companion paper~\cite{SelrekTPB}, we mainly focused on the special case where the selected site was located at the boundary of the sequence. We introduced yet another genealogical construction which we called the \emph{ancestral initiation graph} (AIG), and considered concrete biological applications.

In this contribution, we take a more abstract perspective and consider a general model that describes the evolution under (not necessarily single-crossover) recombination and other evolutionary forces that satisfy certain natural assumptions; in essence, we will assume that their action depends only on a single `active' genetic site. Our motivation is twofold. Firstly, we want to generalise the results from~\cite{SelrekDocumenta,SelrekTPB} to, for instance, the case with selection, recombination \emph{and mutation}. Secondly, and perhaps more importantly, we hope to better bring out the mathematical structure of the approach in the aforementioned work, and obtain a unifying description of the `zoo of dualities' obtained there. Indeed, one of the main contributions is to introduce a \emph{general labelled partitioning process} (gLPP), which simultaneously generalises all the aforementioned dualities. To the best of our knowledge, this is the first time that such a general setting has been considered in the context of mathematical models for recombination. Evans~\cite{EvansBLA} also discusses a general labelled partitioning process, but in the context of genetic drift/resampling rather than recombination. Moreover, in~\cite{EvansBLA}, the partitions act on samples of individuals and not, as in this work, on the set of genetic sites.

This paper is organised as follows. In Section~\ref{sec:psirecoeq}, we introduce our model, namely the general recombination equation as considered in~\cite{haldane,BaakeBaakeSalamat} together with an additional term, and formulate the necessary conditions in a precise way.
Then, in Section~\ref{sec:duality}, we briefly recall the concept of duality for Markov processes, which we use to formalise the link between the evolution and the genealogical structure. There, we also state and prove our first main result; given a dual process for the evolution without recombination, we construct a labelled partitioning process that is dual to the evolution \emph{with} recombination. For this, the duality for the evolution without recombination needs to satisfy certain assumptions that reflect the assumptions on the evolution itself. In Section~\ref{sec:singlecrossover}, we explore the additional structure present in the case of single-crossover, providing an alternative description of the gLPP in terms of an independent collection of Markov processes with an additional starting / initiation mechanism.  In addition, we give a recursive solution in terms of iterated integrals. To not lose our grounding in the real world, we close in Section~\ref{sec:application} by applying our abstract results to the example of the \emph{selection-mutation recombination equation}.

\section{The $\psi$-recombination equation}\label{sec:psirecoeq}
We consider a large population of constant size, consisting of haploid individuals\footnote{In contrast to \emph{diploid} individuals, who carry two sets of chromosomes, \emph{haploid} individuals only carry one set. Although recombination takes place in the context of sexual reproduction and thus requires diploid organisms, this simplification can be justified by the assumption of Hardy-Weinberg equilibrium.}. Their genomes are thought of as linear arrangements of $n$ genetic sites, the set of which we denote by $S \defeq [n] \defeq [1:n] \defeq \{1,\ldots,n\}$. The allele at each site $i$ is represented by a letter $x_i^{} \in X_i$, where the alphabets $X_i$ may be distinct. For now, we will only assume that the alphabets are locally compact Hausdorff spaces; this generality will be useful in view of possible applications in the context of quantitative genetics, and does not require any additional effort; cf.~\cite[Ch.~IV]{BurgerBuch}.
Formally, the set of all genetic types is given by 
\begin{equation*}
X \defeq X_1 \times \ldots \times X_n.
\end{equation*}
Assuming that the population is sufficiently large so that we may neglect stochastic effects and normalising the constant population size, we model the evolution of the genetic type composition of this population by a continuous-time family $(\omega_t^{})_{t \geqslant 0}^{}$ of (Borel) probability measures. We will denote the set of all probability measures by $\cP(X)$. Later, we will need some language to describe the distribution of types that are only specified at a subset of sites. To this end, we define, for any 
 $A \subseteq S$, the set of \emph{marginal types} with respect to $A$ as
\begin{equation*}
X_A^{} \defeq \bigtimes_{i \in A} X_i.
\end{equation*}
Accordingly, we write $\cP(X_A)$ for the corresponding set of marginal \emph{distributions}. If $A = \varnothing$, then $X_\varnothing = \{\epsilon\}$, where $\epsilon$ should be thought of as the empty sequence. 
Finally, for any $A, B \subseteq S$ and any $\nu \in \cP(X_A)$, we denote by $\nu^{}_B$ the \emph{marginal distribution of $\nu$ with respect to $A \cap B$}, which is a (Borel) probability measure on $A \cap B$, defined via
\begin{equation*}
\nu_B^{}(\cdot) \defeq \nu \big (\cdot \times X_{S \setminus ( A \cap B )} \big ).
\end{equation*}
It is not difficult to see that this definition is consistent in the sense that for all subsets $A$, $B$ and $C$ of $S$ and all $\nu \in \cP(X_C)$, we have
\begin{equation*}
\big (\nu_A^{} \big )_B = \big (\nu_B^{} \big )_A = \nu_{A \cap B}^{} = \nu_{A \cap B \cap C}^{},
\end{equation*}
including the case that $A \cap B \cap C = \varnothing$.

We will consider a more general version of the recombination process compared to \cite{SelrekDocumenta} and \cite{SelrekTPB}, where we restricted our attention to single-crossover only. As in \cite{haldane} and \cite{BaakeBaakeSalamat}, we allow for multiple crossovers, and even an arbitrary number of parents. To state this in a concise way, we will need the notion of a \emph{partition} of the set of sequence sites.

Recall that a partition of a (nonempty) set $M$ is a collection of nonempty, disjoint subsets of $M$, called \emph{blocks}, whose union is $M$; we denote the set of all partitions of $M$ by $\bP(M)$.  To any $\cA \in \bP(S)$, we associate a \emph{recombination rate} $\varrho_\cA^{} \geqslant 0$.
Intuitively, this means that, at rate $\varrho_\cA^{}$, a group of $|\cA|$ parent individual, corresponding to the blocks of $\cA$ and chosen independently, uniformly and without replacement, comes together to produce a new offspring individual. We call such an offspring \emph{$\cA$-recombined}, and its type $y$ is given by  $y_i = x_{A(i),i}^{}$ where $A(i) \in \cA$ is the unique block of $\cA$ that contains $i$ and $x_{A(i),i}$ is the $i$-th component of the type of the parent associated with block $A(i)$. Due to our  assumption of random mating, the type of an $\cA$-recombined offspring individual (born, say, at time $t$) is distributed according to the product measure
\begin{equation} \label{recombinator}
\cR_\cA (\omega_t^{}) \defeq \bigotimes_{A \in \cA} \omega_{A,t}^{},
\end{equation}
where we write $\omega_{A,t}^{}$ instead of $(\omega_t^{})_A^{}$.
The operator $\cR_\cA : \cP(X) \to \cP(X)$ thus defined is called a \emph{recombinator}; see \cite{bb,haldane,recoreview}. 

Thus, the evolution of the type composition under the influence of recombination alone is captured by the \emph{general recombination equation}
\begin{equation} \label{purereco}
\dot{\omega}_t^{} = \sum_{\cA \in \bP(S)} \varrho_\cA^{} \big ( \cR_\cA^{}(\omega_t^{}) - \omega_t^{}    \big ),
\end{equation}
where the term $-\omega_t^{}$ accounts for the replacement of randomly chosen individuals by $\cA$-recombined offspring, thereby keeping the population size constant. This equation was solved in~\cite{BaakeBaakeSalamat} via a lattice-theoretic approach, and in~\cite{haldane} by exploiting its connection to a stochastic partitioning process; see also Section~\ref{sec:duality}.

\begin{remark}
In what follows, the following retrospective (from the offspring to the parents) interpretation of Eq.~\eqref{purereco} will be useful; namely that each individual is independently, at rate $\varrho_\cA^{}$, replaced by an $\cA$-recombined individual that chooses its parents independently. For a recent account of ancestral methods in the context of recombination, see~\cite{recoreview}. \hfill $\diamondsuit$
\end{remark}

We assume that the population is subject to additional evolutionary forces such as selection and/or mutation, which we assume to act independently of recombination and which are described by a Lipschitz continuous (with respect to the total variation norm) vector field $\psi$ on
$\cP(X)$. So, the complete time evolution of the type composition of our population is captured by the
\emph{$\psi$-recombination equation},
\begin{equation}\label{psirecoeq}
\dot{\omega}_t^{} = \psi(\omega_t^{}) +  \sum_{\cA \in \bP(S)} \varrho_\cA^{} \big ( \cR_\cA^{}(\omega_t^{}) - \omega_t^{}    \big ).
\end{equation}
For later use, we denote the flow associated to Eq.~\eqref{psirecoeq} by $\Psi = (\Psi_t^{})_{t \geqslant 0}^{}$. It will also be useful to introduce the flow generated by $\psi$ alone, denoted by $\Psi^{(0)} = (\Psi^{(0)}_t)_{t \geqslant 0}^{}$.
The Lipschitz continuity (with respect to the norm of total variation) of the recombinators~\cite[Prop.~1]{haldane} and of $\psi$ guarantee that Eq.~\eqref{psirecoeq} has a unique (global) solution for any initial condition.  
Moreover, we want $\psi$ to play nice with recombination, as detailed in Assumption~\ref{ass:psiassumption} below. Most importantly, we want to ensure that the way individuals are affected by evolution (apart from recombination) only depends on the allele at a fixed site $i_\bullet^{} \in S$. We call $i_\bullet^{}$ the \emph{active} site and set $S^\circ \defeq S \setminus \{i_\bullet^{}\}$.
\begin{assumption}\label{ass:psiassumption}
\begin{enumerate}[label=\textnormal{(\alph*)}]
\item \label{item:multiplicativity}
For all $\nu \in \cP(X)$ of the form $\nu = \nu_C^{} \otimes \nu_D^{}$ where $\{C,D\} \in \bP(S)$ and $i_\bullet^{} \in C$, 
\begin{equation*}
\psi (\nu) = \psi \big ( \nu_C^{} \otimes \nu_D^{}  \big ) = \psi (\nu)_C^{} \otimes \nu^{}_D.
\end{equation*}
\item \label{item:linearity}
$\psi$ acts linearly on subsets of elements of $\cP(X)$ that share the same marginal distribution at site $i_\bullet^{}$. That is, if $\mu$ and $\mu'$ are type distributions with $\mu_{\{i_\bullet^{}\}}^{} = \mu'_{\{i_\bullet^{} \}}$ and $\alpha \in [0,1]$, then
\begin{equation*}
\psi \big ( \alpha \mu + (1 - \alpha) \mu' \big ) = \alpha \psi (\mu) + (1 - \alpha) \psi (\mu').
\end{equation*} 
\end{enumerate}
\end{assumption}

In words, Assumption~\ref{ass:psiassumption}\ref{item:multiplicativity} states that if the set of sequence sites can be decomposed into two independent parts,  then this independence is preserved under the action of $\psi$. Moreover, only the part that contains the active site is affected by 
$\psi$. Condition~\ref{ass:psiassumption}\ref{item:linearity} means that any nonlinearity contained in $\psi$ depends only on the marginal distribution at $i_\bullet^{}$. 

Due to the particular role that is played in Assumption~\ref{ass:psiassumption}\ref{item:multiplicativity} by the part of a partition that contains $i_\bullet^{}$, we give it a special name.

\begin{definition} \label{def:head}
Let $\cA \in \bP(S)$. The \emph{head} of $\cA$ is defined as the (unique) block $A_\bullet \in \cA$ with $i_\bullet^{} \in A_\bullet$. The complement of $\{A_\bullet\}$, the collection $\cA \setminus \{A_\bullet\}$, is denoted by $\cA^\circ$ and referred to as the \emph{tail} of $\cA$.
\end{definition}

Let now $\cA = \{A_1,\ldots,A_n\} \in \bP(S)$ with head $A_\bullet$. Then, an iterative application
 of Assumption~\ref{ass:psiassumption}\ref{item:multiplicativity} and the definition of the recombinator in Eq.~\eqref{recombinator} shows that
\begin{equation} \label{onlyonthehead}
\psi \big ( \cR_{\cA} (\nu) \big ) = \psi(\nu)_{A_\bullet}^{} \otimes \bigotimes_{A \in \cA \setminus \{A_\bullet\}} \nu_A^{}.
\end{equation}
Here and in the following, we assume products such as on the right-hand side of Eq.~\eqref{onlyonthehead} to be implicitly site-ordered.

\section{Duality}\label{sec:duality}
For the convenience of the reader, we briefly recall the notion of duality for Markov processes. Generally speaking, two Markov processes $Y = (Y_t)_{t \geqslant 0}$ and $Z = (Z_t)_{t \geqslant 0}$ are said to be \emph{dual} with respect to a \emph{duality function} $h$ if 
\begin{equation}\label{abstractduality}
\EE \big [h(y,Z_t) \mid Z_0 = z \big ] = \EE \big [h(Y_t,z) \mid Y_0 = y \big ]
\end{equation}
for all choices of initial conditions $z$ and $y$, implicitly assuming integrability. In mathematical population genetics, this concept is used extensively to formalise the link between evolutionary dynamics and ancestral processes; see \cite[Ch.~3.4.4]{liggett} and \cite{kurtjansen} for a thorough exposition, and \cite{Moehle99} for early applications in the context of population genetics. While the classical theory is mainly concerned with scalar-valued duality functions, the generalisation to vector-valued duality functions, as needed here\footnote{In fact, our focus will be on duality  functions that take values in $\cP(X)$.
Keep in mind that the space of \emph{signed} Borel measures on $X$, equipped with the norm of total variation, is a Banach space. Thus, even for uncountable $X$, the expectation can be understood in the sense of Bochner integration.}, is straightforward. We will throughout abbreviate dualities of the form~\eqref{abstractduality} as triples $(Y,Z,h)$.

To fit our discussion into this framework, we interpret the solution of a differential equation as a Markov process with deterministic transitions. In this spirit, the (general) solution of Eq.~\eqref{psirecoeq} can be viewed as a Markov process with transition semigroup $P = (P_t)_{t \geqslant 0}$ given by
\begin{equation*}
P_t^{}(\nu,\cdot) \defeq \delta_{\Psi_t(\nu)}^{}.
\end{equation*}
Similarly, in the absence of recombination, we interpret the solution $\omega^{(0)} = (\omega_t^{(0)})_{t \geqslant 0}^{}$ of 
\begin{equation} \label{onlypsi}
\dot \omega_t^{(0)} = \psi \big ( \omega_t^{(0)} \big ),
\end{equation}
as a Markov process with transition semigroup $P^{(0)}$, given by
\begin{equation*}
P_t^{(0)} (\nu,\cdot) \defeq \delta_{\Psi^{(0)}_t(\nu)}^{}.
\end{equation*}
It is obvious that the transition semigroups $P$ and $P^{(0)}$ are Feller. Moreover, their infinitesimal generators $\cG$ and $\cG^{(0)}$ are defined on $\cC^1 \big (  \cP(X), \RR \big  )$, the set of continuously differentiable, real-valued functions on $\cP(X)$ via
\begin{equation*}
\cG \varphi (\nu) \defeq \lim_{h \to 0} \frac{1}{h} \big [ \varphi \big (\Psi_h (\nu) \big ) - \varphi(\nu) \big ] 
= \lim_{h \to 0} \frac{1}{h} \Big [\varphi \Big (\nu + h \psi(\nu) + h \sum_{\cA \in \bP(S)} \varrho_\cA^{} \big ( \cR_\cA(\nu)    - \nu  \big ) \Big )  - \varphi(\nu) \Big ]
\end{equation*}
and
\begin{equation*}
\cG^{(0)} \varphi (\nu) \defeq \lim_{h \to 0} \frac{1}{h} \big [ \varphi \big ( \Psi^{(0)}_h (\nu) \big ) - \varphi(\nu) \big ]
= \lim_{h \to 0} \frac{1}{h} \big [ \varphi \big ( \nu + h \psi (\nu)  \big )    - \varphi (\nu)       \big].
\end{equation*}
Note that $\cG \varphi (\nu)$ ($\cG^{(0)} \varphi (\nu)$) is simply the directional derivative of $\varphi$ at $\nu$ with respect to
\mbox{$\psi(\nu) + \sum_{\cA \in \bP(S)} \varrho_\cA^{} (\cR_\cA(\nu) - \nu )$} (with respect to $\psi(\nu)$).

Later, we will see how, under appropriate assumptions, we can construct a duality $(\omega,\Sigma,H)$ for Eq.~\eqref{psirecoeq} from a duality $(\omega^{(0)},Y,h)$ for Eq.~\eqref{onlypsi}. But first, we consider a few examples for the latter.
\begin{example} [deterministic flows] \label{ex:semigroup} 
Let $\psi$ be arbitrary, satisfying Assumption~\ref{ass:psiassumption}, and let $Y = (Y_t)_{t \geqslant 0}$ be $\RR_{\geqslant 0}$-valued and deterministic with  $Y_t \defeq Y_0 + t$. The duality function $h$ is given by the flow associated with $\psi$ alone, that is, $h(\nu,y) \defeq \Psi^{(0)}_y (\nu)$ for all $\nu \in \cP(X)$ and $y \in \RR_{\geqslant 0}$.
\end{example}

\begin{example}[selection] \label{ex:pureselection}
Assume that $\psi$ describes frequency-independent selection as in \cite{SelrekDocumenta,SelrekTPB}, that is, $X = \{0,1\}^n$ and
\begin{equation}\label{psiseldefprime}
\psi (\nu)(x) \defeq s \big ( (1 - x_{i_\bullet^{}}^{}) \nu(x) - f(\nu) \nu(x) \big );
\end{equation}
see also Section~\ref{sec:application}. Here, $f(\nu) \defeq \nu_{i_\bullet^{}}^{} (0)$ is the frequency of fit individuals in a population with type distribution $\nu$; an individual of type $x$ is called \emph{fit} if $x_{i_\bullet^{}}^{} = 0$, and  \emph{unfit} otherwise. Tracking simply the frequency $y(t)$ of the unfit type by integrating Eq.~\eqref{psiseldefprime}  with respect to the indicator $\one_{x_{i_\bullet = 1}}^{}$ yields 
\begin{equation*}
\dot y(t) = -s y(t) \big (1  - y(t)  \big ),
\end{equation*}
which is the classical selection equation for $n=1$.
 
An intuitive interpretation of Eq.~\eqref{psiseldefprime} is that each individual in the population is, at rate $s$, replaced by the offspring of a fit individual. It was shown in \cite{SelrekDocumenta} that the solution of Eq.~\eqref{onlypsi} with $\psi$ as in~\eqref{psiseldefprime} is dual to the line counting process 
$K = (K_t)_{t \geqslant 0}$ of a Yule process with (binary) branching rate $s$, which can be viewed as the line counting process of the \emph{ancestral selection graph} (ASG)~\cite{KroneNeuhauser97} in the deterministic limit~\cite{BaakeCorderoHummel18}. This process counts the number of potential ancestors of an individual sampled from the current population that were alive at backward time $t$. This individual is unfit, if and only if all its potential ancestors were unfit. Thus, if it has $k$ potential ancestors whose types are independently distributed according to $\nu$, its type is distributed according to 
\begin{equation*}
h(\nu,k) \defeq \big (1 - f(\nu) \big )^k d(\nu) + \big ( 1 - (1 - f(\nu))^k \big ) b(\nu),
\end{equation*}
where $b(\nu)$ ($d(\nu)$) is the type distribution within the subpopulation of fit (unfit) individuals, and $f(\nu)$ is the frequency of fit individuals in a population with type distribution $\nu$.

 For the generalisation to frequency-dependent selection and mutation at a single locus, we refer the reader to \cite{BaakeCorderoHummel21}.
\end{example}

\begin{example}[mutation] \label{ex:mutation}
Assume that $X = \{0,1\}^n$ and 
\begin{equation*}
\psi(\nu) \defeq \sum_{i \in [n]} u_i^{} \sum_{x \in X} \nu(x) \big ( m_{i,1} \delta_{(x_1^{},\ldots,x_{i-1}^{},1,x_{i+1}^{},\ldots,x_n^{})}^{}   +  m_{i,0} \delta_{(x_1^{},\ldots,x_{i-1}^{},0,x_{i+1}^{},\ldots,x_n^{})}^{} - \delta_x^{}  \big ).
\end{equation*}
In words, mutations occur independently at each site $i$ at rate $u_i^{}$. Upon a mutation at site $i$, the allele at this site mutates to $1$ with probability $m_{i,1}^{}$ and to $0$ with probability $m_{i,0}^{}$, regardless of the orginal type.
If a given site did not experience mutation on the ancestral lineage of a sampled individual, the letter at this site is copied from its ancestor. Otherwise, it is determined by the last (or \emph{first}, when looking backward in time) mutation. We therefore need to keep track, at each site independently, of this first mutation, or of the fact that no mutation has occurred.
Thus, the dual process is, in this case, given by an independent collection of continuous-time Markov chains $Y_1,\ldots,Y_n$ with state space $\{0,1,\circ\}$. For each $i \in [n]$, $Y_{i,t} = \circ$ means that until (backward) time $t$, site $i$ has remained unaffected by mutation while $Y_{i,t} = 0$ ($=1$) means that a mutation has changed the allele to $0$ (to $1$). Accordingly, each $Y_i$ transitions from $\circ$ to $0$ and $1$ with rates $u_i^{} m_{i,0}^{}$ and $u_i^{} m_{i,1}^{}$, respectively, and the states $0$ and $1$ are absorbing. Given $y \in \{0,1,\circ\}^n$ and $\nu \in \cP(X)$, we define $h(\nu,y)$ via the following random experiment. First, draw a sample $x$ according to $\nu$. Then, for all $i \in [n]$ with $y_i^{} \neq \circ$, change $x_i^{0}$ to $y_i^{}$. More formally, 
\begin{equation*}
h(\nu,y)(x) \defeq h(\nu,y_1^{},\ldots,y_n^{})(x) \defeq \nu_B^{} (x_B^{}) \prod_{i \in S \setminus B} \delta_{x_i^{},y_i{}}^{},
\end{equation*}
where $B$ is the set of sites $i$ for which $y_i^{} = \circ$ and $\delta$ denotes the Kronecker delta. We will elaborate on this example in Section~\ref{sec:application}.
\end{example}

\begin{remark} \label{notmultiplicative}
Note that in Exs.~\ref{ex:semigroup} and~\ref{ex:pureselection}, $\psi$ satisfies Assumption~\ref{ass:psiassumption}. For Ex.~\ref{ex:semigroup}, this is immediate from Assumption~\ref{ass:psiassumption}, while for Example~\ref{ex:pureselection}, this was alluded to in~\cite[Remark 7.14 (i)]{SelrekDocumenta} and will be proven in Section~\ref{sec:application}. However, $\psi$ in Ex.~\ref{ex:mutation} does \emph{not} satisfy Assumption~\ref{ass:psiassumption}; we will address this problem in Section~\ref{sec:application}.
\end{remark}

Observe that in all three examples, there exists a distinguished state $\emptyset$ (not to be confused with $\varnothing$ which denotes the empty set) for the dual process such that $h(\cdot,\emptyset)$ is the identity on $\cP(X)$;
namely $\emptyset = 0$ in Ex.~\ref{ex:semigroup}, $\emptyset = 1$ in Ex.~\ref{ex:pureselection} and $\emptyset =  (\circ,\circ,\ldots,\circ)$ in Ex.~\ref{ex:mutation}.  In each example, the duality relation~\eqref{abstractduality} thus contains the stochastic representation
\begin{equation}\label{stochasticrepresentation}
\omega_t^{(0)} = h(\omega_t^{(0)},\emptyset) = \EE [h( \omega_0^{(0)},Y_t) \mid Y_0 = \emptyset]
\end{equation}
of the solution of Eq.~\eqref{onlypsi} as a special case. Generally, we interpret $h(\omega_0^{},y)$ as the distribution of the type of an individual whose genealogy is described by $y$, where $\omega_0^{}$ describes the type distribution within the ancestral generation. Therefore, the state $\emptyset$ can be viewed as an `empty genealogy'  of an individual whose ancestry has remained unaffected by the evolutionary forces modelled by $\psi$.
Recall that in Example~\ref{ex:semigroup}, Assumption~\ref{ass:psiassumption} immediately implies Assumption~\ref{ass:psiassumption} for $h(\cdot,t)$ in place of $\psi$ for all $t \geqslant 0$. As we will see, this is the key property that allows to lift the duality 
$(\omega^{(0)},Y,h)$ to the case with recombination. In addition, we need a few technical assumptions.
\begin{assumption}\label{ass:hassumption}
The duality $(\omega^{(0)},Y,h)$ satisfies the following conditions.
\begin{enumerate}[label=\textnormal{(\alph*)}]
\item
$Y$ is a Feller process on a Polish state space $E$ with semigroup $Q = (Q_t^{})_{t \geqslant 0}^{}$ and infinitesimal generator $\cQ$.
\item \label{item:hlinearity}
For all $y \in E$, $h(\cdot,y)$ satisfies Assumption~\textnormal{\ref{ass:psiassumption}}.
\item \label{item:emptygen}
There is $\emptyset \in E$ such that $h(\nu,\emptyset) = \nu$ for all $\nu \in \cP(X)$.
\item \label{item:diffbar}
For all $y \in E$, $h(\cdot,y) \in \cC^1 \big (\cP(X) \big )$ and for all $\nu \in \cP(X)$, $h(\nu,\cdot) \in \cD(\cQ)$, where $\cD(\cQ)$ is the domain of the generator $\cQ$ of $Y$. 
\end{enumerate}
\end{assumption}
We assume differentiability in \ref{item:diffbar} to guarantee that $h(\cdot,y)$ is in the domain of the generator $\cG^{(0)}$ of $\omega^{(0)}$.

\subsection{Including recombination} \label{subsec:addrecombination}
In the following, we assume that $(\omega^{(0)},Y,h)$ satisfies Assumption~\ref{ass:hassumption}. 
As announced earlier, we want to lift $(\omega^{(0)},Y,h)$ to a duality $(\omega,\Sigma,H)$ for Eq.~\eqref{psirecoeq}; in essence, $\Sigma$ will be a partitioning process whose blocks are labelled by independent copies of $Y$.

It is well known~\cite{haldane,recoreview} that the pure recombination equation is dual to a partitioning process 
$\Sigma^{(0)} = (\Sigma^{(0)}_t)_{t \geqslant 0}$, which takes values in $\bP(S)$ and keeps track of the fragmentation of an individual's genome across its ancestors. More explicitly, the blocks of $\Sigma^{(0)}_t$ correspond to different parts of the genome that are inherited from independent ancestors alive at backward time $t$ (forward time $0$). Therefore, given a realisation of the partitioning process and the initial type distribution $\nu$, the type distribution at time $t$ is given by
\begin{equation}\label{purerecorep}
\bigotimes_{\sigma \in \Sigma_t^{(0)}} \nu_\sigma^{}. 
\end{equation}
The evolution of $\Sigma^{(0)}$ can be described as follows.
 Independently of each other and with rate $\varrho_\cB^{}$ for any $\cB \in \bP(S)$, every block $\sigma$ is subdivided into the collection
\begin{equation*}
\{ \sigma \cap B : B \in \cB, \, \sigma \cap B \neq \varnothing \} \defeq \cB|_{\sigma}
\end{equation*} of blocks, which we call the partition that $\cB$ induces on $\sigma$; we also say that the block $\sigma$ is hit by a  \emph{$\cB$-splitting}. If the intersection is empty for all but one $B \in \cB$, the transition is silent.  Intuitively, if $\sigma$ is at backward time $t$ hit by a $\cB$-splitting, this means that the ancestor of $\sigma$, that was alive at time $t-0$, is itself a $\cB$-recombined offspring of parents that were alive at backward time $t$. 

To combine this with the duality $(\omega^{(0)},Y,h)$ for the case without recombination, we define a \emph{labelled} partitioning process $\Sigma$ by associating to each block an independent copy of $Y$ that describes the evolution of the ancestry of the corresponding ancestor \emph{under the action of $\psi$ alone}. Given the initial type distribution $\nu$ and a realisation of the labelled partitioning process $\Sigma$, we see that the type distribution at time $t$ is, in analogy with Eq.~\eqref{purerecorep} and recalling our earlier interpretation of the duality function $h$, given by
\begin{equation*}
\bigotimes_{\sigma \in \Sigma_t^{}} h(\nu,y_\sigma^{})_\sigma^{},
\end{equation*}
where $y_\sigma^{} \in E$ is the label associated with the block $\sigma$.

To fully describe the evolution of $\Sigma$, we still need to understand the effect that the fragmentation of blocks has on the labels.
Assume for instance that the block $\sigma$ is, say at time $t$, hit by a $\cB$-splitting. This means that the ancestor of $\sigma$ is part of a subpopulation in which the blocks of $\cB$ evolved independently of each other. Because we are assuming that the evolutionary forces that are modelled by $\psi$ (forward in time) and by $Y$ (backward in time) have only acted on the head $B^{}_\bullet$ of $\cB$, the marginal distribution with respect to $\sigma$ is given by 
\begin{equation} \label{marg354e}
h(\nu,Y_{\sigma,t-0})_{\sigma \cap B_\bullet^{}}^{} \otimes \bigotimes_{B \in \cB^\circ} \nu_{\sigma \cap B}^{} =
h(\nu,Y_{\sigma,t-0})_{\sigma \cap B_\bullet^{}}^{} \otimes \bigotimes_{B \in \cB^\circ} h(\nu,\emptyset)_{\sigma \cap B}^{},
\end{equation}
where the equality is due to Assumption~\ref{ass:hassumption}~\ref{item:emptygen}; this is the backward-time analogue of Eq.~\eqref{onlyonthehead}.
Comparing with Eq.~\eqref{marg354e}, we see that as $\sigma$ is subdivided into the blocks $\sigma \cap B$, the label $Y_{\sigma,t-0}$ associated to $\sigma$ just before the splitting is inherited by $\sigma \cap B_\bullet^{}$ (if nonempty) while new independent copies of $Y$, starting from $\emptyset$, are associated to the intersections of $\sigma$ and the blocks in $\cB^\circ$. In particular, note that this means that the transition is silent if $\sigma \subseteq B^{}_\bullet$, and that the transition amounts to `resetting' the label of $\sigma$ to $\emptyset$ if $\sigma \subseteq B$ for some $B \in \cB^\circ$. To summarise, defining a labelled partition as a pair $(\cA,v)$ where $\cA \in \bP(S)$ and $v = (v_A^{})_{A \in \cA}^{} \in E^\cA$ is the vector of labels, the labelled partitioning process can be informally described as follows (compare~\cite[Def.~7.3]{SelrekDocumenta}).
\begin{enumerate}[label=(\arabic*)]
\item \label{trans:splitting}
$(\cA,v) \longrightarrow \big ((\cA \setminus \{A\}) \cup \{A \cap B : B \in \cB, \, A \cap B \neq \varnothing \},w \big )$ at rate $\varrho_\cB^{}$ for all $\cB \in \bP(S)$, all $A \in \cA$ such that $A \not \subseteq B$ for any $B \in \cB$ and $w$ such that $w_{A \cap B^{}_\bullet}^{} = v_A^{}$ (if nonempty) and $w_{A \cap B}^{} = \emptyset$ for all $B \in \cB^\circ$. 
\item \label{trans:resetting}
$(\cA,v) \longrightarrow (\cA,w)$ at rate 
\begin{equation*}
r_A^{} \defeq \sum_{\cB \in \bP(S) : A \subseteq B \textnormal{ for some } B \in \cB^\circ} \varrho_\cB^{}
\end{equation*}
for each $A \in \cA$ and $w$ such that $w_A^{} = \emptyset$ and $w_C^{} = v_C^{}$ for all $C \in \cA \setminus \{A\}$.
\item \label{trans:otherwise}
In between transition of type~\ref{trans:splitting} or~\ref{trans:resetting}, the labels evolve independently according to the transition semigroup of $Y$.
\end{enumerate}
\begin{remark}\label{rmk:onlyacopy}
Note that in \ref{trans:splitting}, the head of the partition on the right-hand side is given by $A_\bullet^{} \cap B_\bullet^{}$, so that the label of the head is not affected by splittings. Similiarly in \ref{trans:resetting}, we always have $r_{A^{}_\bullet}^{} = 0$. Therefore, by \ref{trans:otherwise}, the label of the head is simply a copy of $Y$.
\end{remark}

In order to state the generator of the labelled partitioning process in a concise manner (which will be needed for the formal proof of the duality), it is useful to introduce some additional notation. Given a labelled partition $(\cA,v)$ and a(n) (unlabelled) partition $\cB \in \bP(S)$, we define $(\cA,v) \boxwedge_{A} \cB$ to be the labelled partition of $S$ that is constructed by replacing the block $A$ of $\cA$ by the collection $A \cap B$ for all $B \in \cB$ such that $A \cap B$ is nonempty. For each block of the form  $A \cap B$, the associated label $v_{A \cap B}^{}$ is given by $v_A^{}$ if $i_\bullet^{} \in B$, and $\emptyset$ otherwise. Now, transitions~\ref{trans:splitting}--\ref{trans:resetting} can be summarised as follows: if the labelled partitioning process is currently in state $(\cA,v)$, it transitions to $(\cA,v) \boxwedge_{A} \cB$ at rate $\varrho_\cB^{}$, independently for each $A \in \cA$ and $\cB \in \bP(S)$.

\begin{definition}\label{def:partitioningprocess}
The \emph{general labelled partitioning process} (gLPP) is a continuous-time Markov chain $\Sigma = (\Sigma^{(0)}_t,V_t)_{t \geqslant 0}$ with state space
\begin{equation*}
F = \bigcup_{\cA \in \bP(S)} \big (\{\cA\} \times E^\cA \big)
\end{equation*}
and generator $\cL$, which is defined on 
\begin{equation*}
\widetilde \cD (\cL) \defeq \Big \{\varphi :  \, \varphi \big ( (\cA,v_i^{},\ldots,v_{i-1}^{},\cdot,v_{i+1}^{},\ldots,v_{|\cA|}^{} )  \big ) \in \cD(\cQ) \textnormal{ for all } 1 \leqslant i \leqslant |\cA| \textnormal{ and } v \Big \}
\end{equation*}
via 
\begin{equation*}
\cL \varphi \big ( (\cA,v)   \big ) = \sum_{A \in \cA} \sum_{\cB \in \bP(S)} \varrho_\cB^{} \Big ( \varphi \big ( ( \cA,v) \boxwedge_{A} \cB \big )  
- \varphi \big ( (\cA, v) \big ) \Big ) + \sum_{j = 1}^{|\cA|} \cQ_i \varphi \big ( (\cA,\cdot)    \big )(v),
\end{equation*}
where $v_i^{}$ is the label of the $i$-th block in the usual ordering 
and $\cQ_i$ is the generator of the dual Feller process $Y$, acting on the $i$-th component of the function $\varphi$ restricted to 
$\{\cA\} \times E^{\cA} \cong E^{|\cA|}$. More precisely,
\begin{equation*}
\cQ_i \varphi \big ( (\cA,\cdot)  \big ) (v) \defeq \cQ \varphi \big (  (\cA,v_{A_1}^{},\ldots,v_{A_{i-1}}^{},\cdot,v_{A_{i+1}}^{},\ldots,v_{A_{|\cA|}}^{} ) \big )(v_{A_i}^{}).
\end{equation*}
\end{definition}

It can be shown that F can be endowed with a topology so that it becomes a Polish space and that $\cL$ generates a (unique) Feller semigroup; thus the gLPP is well defined. The proof is straightforward but cumbersome and therefore relegated to the appendix.
\begin{lemma}\label{lem:existence}
The \textnormal{gLPP} in Definition~\textnormal{\ref{def:partitioningprocess}} is well defined. More precisely, $F$ is a Polish space and there exists a unique Feller process 
$\Sigma = (\Sigma_t^{(0)},V_t)_{t \geqslant 0}$ with state space $F$ and generator $\cL$.
\end{lemma}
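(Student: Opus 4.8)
The plan is to verify the two assertions in turn: first that $F$ is Polish, and then that $\cL$ generates a unique Feller semigroup on $C(F)$.

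\textbf{Step 1: $F$ is Polish.} Since $\bP(S)$ is a finite set (as $S = [n]$ is finite), $F = \bigcup_{\cA \in \bP(S)} \big (\{\cA\} \times E^\cA \big)$ is a finite disjoint union of spaces of the form $\{\cA\} \times E^{|\cA|}$. By Assumption~\ref{ass:hassumption}, $E$ is Polish, hence so is each finite power $E^{|\cA|}$, and a finite topological disjoint union of Polish spaces is Polish (a countable disjoint union would also do). I would equip $F$ with the disjoint-union topology, under which each $\{\cA\} \times E^{|\cA|}$ is clopen; this makes $F$ locally compact only if $E$ is, but Polishness is all that is needed for the Feller machinery below. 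I would remark that $C(F) \cong \bigoplus_{\cA} C(E^{|\cA|})$, since continuity on $F$ just means continuity on each clopen piece.

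\textbf{Step 2: $\cL$ generates a unique Feller semigroup.} The idea is to view $\Sigma$ as a Markov chain that alternates between (i) pure jumps of the partition component at bounded total rate, governed by the $\boxwedge$ transitions, and (ii) deterministic-in-the-partition-component evolution of the labels according to $|\cA|$ independent copies of the Feller semigroup $Q$ of $Y$. Concretely, I would decompose $\cL = \cL_{\mathrm{jump}} + \cL_{\mathrm{lab}}$ where $\cL_{\mathrm{jump}} \varphi\big((\cA,v)\big) = \sum_{A \in \cA}\sum_{\cB \in \bP(S)} \varrho_\cB^{}\big(\varphi((\cA,v)\boxwedge_A \cB) - \varphi((\cA,v))\big)$ and $\cL_{\mathrm{lab}}$ is the sum of the $\cQ_i$ terms. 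The operator $\cL_{\mathrm{jump}}$ is a bounded operator on $C(F)$ — its operator norm is at most $2\sum_{A,\cB}\varrho_\cB^{} \leq 2|S|\sum_{\cB \in \bP(S)}\varrho_\cB^{} < \infty$ — and it maps $C(F)$ into itself because the $\boxwedge$ operation acts continuously on the label coordinates (a label is either copied verbatim or reset to the fixed point $\emptyset$, both continuous operations) and the partition target depends only on the discrete component. The operator $\cL_{\mathrm{lab}}$, on the domain $\widetilde\cD(\cL)$, generates the Feller semigroup on $C(F)$ obtained by running, on each clopen block $\{\cA\}\times E^{|\cA|}$, the $|\cA|$-fold tensor product semigroup $Q_t^{\otimes |\cA|}$; this is a Feller semigroup because a finite tensor product of Feller semigroups is Feller, and its core contains functions of the product form $\prod_i g_i(v_{A_i})$ with $g_i \in \cD(\cQ)$, on which the generator acts as $\cL_{\mathrm{lab}}$ by the Leibniz rule. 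Then I would invoke the standard bounded-perturbation theorem: if $(\cL_{\mathrm{lab}}, \widetilde\cD(\cL))$ generates a Feller (strongly continuous, positive, contraction) semigroup and $\cL_{\mathrm{jump}}$ is a bounded operator on $C(F)$ that satisfies the positive-maximum principle relative to it (which it does, being of the form "rate times $(\varphi(\text{jump target}) - \varphi)$" with non-negative rates), then $\cL_{\mathrm{lab}} + \cL_{\mathrm{jump}}$ with domain $\widetilde\cD(\cL)$ also generates a Feller semigroup, and it is unique since $\widetilde\cD(\cL)$ is a core. See \cite[Ch.~4, Thm.~1.1 and the perturbation results of Ch.~1]{liggett} or the classical treatment in Ethier--Kurtz.

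\textbf{Main obstacle.} The only genuinely delicate point is checking that $\cL_{\mathrm{lab}}$ on the stated domain really is the generator of the tensor-product semigroup and not merely an extension or restriction of it — i.e.\ that $\widetilde\cD(\cL)$ is exactly (a core for) $\cD\big(\bigoplus_\cA \cQ^{(|\cA|)}\big)$, where $\cQ^{(m)}$ denotes the generator of $Q_t^{\otimes m}$. The subtlety is that the generator of a tensor product semigroup is the \emph{closure} of the sum $\sum_i \cQ_i$ of the coordinatewise generators, and it is a classical but nontrivial fact that $\cD(\cQ)^{\otimes m}$ (algebraic tensor product) is a core; I would cite this (e.g.\ via the product-formula / Trotter approach, or \cite[App.]{kurtjansen}) rather than reprove it. Everything else — Polishness, boundedness of $\cL_{\mathrm{jump}}$, the positive-maximum principle, uniqueness — is routine bookkeeping, which is exactly why the authors flag that the reader may skip ahead.
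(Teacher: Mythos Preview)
Your Step~1 is fine and indeed cleaner than the paper's explicit metric. Step~2, however, has a genuine gap.

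The problem is your claim that $\cL_{\mathrm{jump}}$ is a bounded operator \emph{on $C_0(F)$}. It is not, because the resetting transitions do not preserve vanishing at infinity. Concretely, take $\cB\in\bP(S)$ and a block $A\in\cA$ with $A\subseteq B$ for some $B\in\cB^\circ$ (so $A\cap B_\bullet=\varnothing$). Then $(\cA,v)\boxwedge_A\cB=(\cA,v')$ with $v'_A=\emptyset$ and $v'_C=v_C$ for $C\neq A$; the coordinate $v_A$ is simply discarded. Hence for $\varphi\in C_0(F)$ the map $v\mapsto\varphi\big((\cA,v)\boxwedge_A\cB\big)$ is constant in $v_A$, and therefore fails to vanish as $v_A$ leaves compact sets while the other coordinates stay fixed. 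Thus $\cL_{\mathrm{jump}}\varphi\notin C_0(F)$ in general, and the bounded-perturbation theorem on $C_0(F)$ does not apply. (Your norm bound $2|S|\sum_\cB\varrho_\cB$ is correct on $C_b(F)$, not on $C_0(F)$.) The ``main obstacle'' you identify---cores for the tensor-product generator---is a secondary technicality compared to this.

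The paper sidesteps the issue entirely by taking a constructive route: it builds $\Sigma$ explicitly from i.i.d.\ exponential holding times and a family of independent copies $Y^{(k)}_A$ of $Y$, then verifies the Feller property by hand. The key step is to decompose $\Sigma_t^{(0)}=\Sigma_t^{(o)}\cup\Sigma_t^{(r)}$ into blocks still carrying an ``original'' label (driven by $Y^{(1)}$ started from $v$) versus blocks whose label has been reset at least once (hence started from $\emptyset$, independently of $v$), and to observe that the $v$-dependence of $\EE[f(\Sigma_t)\mid\Sigma_0=(\cA,v)]$ enters only through $Q_t^{\Sigma^{(o)}_t}$, which is Feller. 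Your perturbation approach could be salvaged by working on $C_b(F)$ to get \emph{some} strongly continuous semigroup and then proving separately that it maps $C_0(F)$ into itself---but that last step is precisely the nontrivial content, and amounts to the paper's decomposition argument.
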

\begin{proof}
See Appendix~\ref{app:proof}.
\end{proof}

We now formulate the main result of this section. 

\begin{theorem}\label{thm:duality}
Assume that the duality $(\omega^{(0)},Y,h)$ holds and that Assumption~\textnormal{\ref{ass:hassumption}} is satisfied. Then, the duality $(\omega,\Sigma,H)$ holds, where
\begin{equation*}
H(\cA,v;\nu) \defeq \bigotimes_{A \in \cA} h(\nu,v_A^{})^{}_A,
\end{equation*}
and $\Sigma$ is the \textnormal{gLPP} according to Definition~\textnormal{\ref{def:partitioningprocess}}.
In particular, this entails the stochastic representation 
\begin{equation*}
\omega_t^{} = \EE \big [ H(\Sigma_t^{(0)},V_t;\omega_0^{} ) \mid \Sigma_0^{(0)} = \{S\}, V_{S,0} = \emptyset \big ].
\end{equation*}
\end{theorem}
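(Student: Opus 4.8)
The plan is to verify the duality relation~\eqref{abstractduality} for the triple $(\omega,\Sigma,H)$ by the standard generator criterion: it suffices to show that for every fixed $(\cA,v) \in F$ the function $\nu \mapsto H(\cA,v;\nu)$ lies in $\cD(\cG)$, that for every fixed $\nu \in \cP(X)$ the function $(\cA,v) \mapsto H(\cA,v;\nu)$ lies in $\widetilde\cD(\cL)$, and that the crucial \emph{generator intertwining}
\begin{equation*}
\cG \big [ H(\cA,v;\cdot) \big ](\nu) = \cL \big [ H(\cdot,\cdot;\nu) \big ](\cA,v)
\end{equation*}
holds for all $(\cA,v)$ and $\nu$. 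Given this identity, the duality follows from the usual argument (e.g.\ via $\tfrac{\dd}{\dd s}\EE[H(\Sigma_{t-s};\Psi_s(\nu))]=0$), using that both semigroups are Feller (Lemma~\ref{lem:existence} for $\Sigma$, and the remarks preceding it for $P$) and that $H$ is bounded and continuous; the stochastic representation is then the special case $\cA = \{S\}$, $v_S = \bs 0$, where $H(\{S\},\bs 0;\nu) = h(\nu,\emptyset)_S = \nu$ by Assumption~\ref{ass:hassumption}\ref{item:emptygen}.

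The regularity claims are where Assumption~\ref{ass:hassumption} is consumed: $H(\cA,v;\cdot)$ is a finite tensor product of maps $\nu \mapsto h(\nu,v_A)_A$, each in $\cC^1(\cP(X))$ by~\ref{item:diffbar}, hence $H(\cA,v;\cdot) \in \cC^1(\cP(X)) = \cD(\cG^{(0)}) \subseteq \cD(\cG)$ (the recombinators are smooth, so adding them does not spoil differentiability); similarly, for fixed $\nu$, restricting the $i$-th label to vary shows $H(\cdot,\cdot;\nu)$ restricted to $\{\cA\} \times E^\cA$ is, in each coordinate, of the form $y \mapsto h(\nu,y)_{A_i}$ tensored with fixed factors, which lies in $\cD(\cQ)$ by~\ref{item:diffbar} and~\ref{item:hlinearity} together with linearity of the tensor product, so $H(\cdot,\cdot;\nu) \in \widetilde\cD(\cL)$.

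For the intertwining itself I would split $\cG = \cG^{(0)} + \cG^{\text{reco}}$ and $\cL = \cL^{\text{lab}} + \sum_i \cQ_i$, where $\cG^{\text{reco}}\varphi(\nu)$ is the directional derivative along $\sum_\cA \varrho_\cA(\cR_\cA(\nu)-\nu)$ and $\cL^{\text{lab}}$ is the pure-splitting part. The $\psi$-part and the $\cQ$-part match by applying the hypothesised duality $(\omega^{(0)},Y,h)$ one block at a time: differentiating $H(\cA,v;\cdot) = \bigotimes_A h(\cdot,v_A)_A$ along $\psi(\nu)$ and using Assumption~\ref{ass:psiassumption}\ref{item:multiplicativity}--\ref{item:linearity} for $h(\cdot,v_A)$ (which holds by~\ref{item:hlinearity}) shows that only the head factor feels $\psi$ in a nonlinear way and that, block by block, $\cG^{(0)}[h(\cdot,v_A)_A](\nu)$ equals $\cQ[h(\nu,\cdot)_A](v_A)$ — this is precisely the infinitesimal form of the given duality, combined with the product rule; summing over blocks gives $\sum_i \cQ_i$. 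For the recombination/splitting part, one computes $\cR_\cA$ acting inside a tensor product $\bigotimes_A h(\nu,v_A)_A$: by the backward analogue of Eq.~\eqref{onlyonthehead} (i.e.\ Eq.~\eqref{marg354e}) a $\cB$-split of block $A$ turns the factor $h(\nu,v_A)_A$ into $h(\nu,v_A)_{A\cap B_\bullet} \otimes \bigotimes_{B\in\cB^\circ} h(\nu,\emptyset)_{A\cap B}$, which is exactly $H((\cA,v)\boxwedge_A \cB;\nu)$ read off the relevant block; matching this term by term with the rate-$\varrho_\cB$ transitions in $\cL^{\text{lab}}$ yields $\cG^{\text{reco}}[H(\cA,v;\cdot)](\nu) = \cL^{\text{lab}}[H(\cdot,\cdot;\nu)](\cA,v)$.

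The main obstacle is the recombination/splitting computation: one must carefully expand the directional derivative of a tensor product under the (multilinear) recombinators $\cR_\cA$, keep track of which block of the emerging finer partition inherits the old label and which receive $\emptyset$, and check that the combinatorial bookkeeping of $(\cA,v)\boxwedge_A\cB$ — in particular the silent transitions when $A \subseteq B_\bullet$ and the label-resetting when $A \subseteq B$ for some $B \in \cB^\circ$ — matches exactly what the operator $\cL$ prescribes. The role of Assumption~\ref{ass:hassumption}\ref{item:emptygen} here is essential: it is what lets one rewrite the `frozen' marginals $\nu_{A\cap B}$ appearing after a split as $h(\nu,\emptyset)_{A\cap B}$, so that the right-hand side is genuinely of the form $H(\,\cdot\,;\nu)$ evaluated at a new labelled partition rather than some ad hoc object.
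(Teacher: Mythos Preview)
Your approach is correct and essentially the same as the paper's: both verify the generator identity $\cG H(\cA,v;\cdot)(\nu) = \cL H(\cdot;\nu)(\cA,v)$ via the product rule and then invoke the standard duality criterion for Feller semigroups. The paper organises the computation slightly differently --- it first evaluates the full $\cG$ on a single factor $h(y,\cdot)$ by expanding $\Psi_\delta(\nu)$ as an approximate convex combination of $\Psi^{(0)}_\delta(\nu)$ with product measures that share the same $\{i_\bullet\}$-marginal, and only afterwards applies the product rule over blocks --- whereas you split $\cG = \cG^{(0)} + \cG^{\text{reco}}$ and $\cL = \cL^{\text{lab}} + \sum_i \cQ_i$ first; this is a purely organisational difference. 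One small correction worth making: the linearity half of Assumption~\ref{ass:hassumption}\ref{item:hlinearity} is actually needed for the \emph{recombination} part (it is what lets you replace the directional derivative of $h(\cdot,v_A)$ along $\cR_\cB(\nu)-\nu$ by the finite difference $h(\cR_\cB(\nu),v_A)-h(\nu,v_A)$, since $\nu$ and $\cR_\cB(\nu)$ agree at site $i_\bullet$), not for the $\psi$-part, where the block-by-block identity $\cG^{(0)}[h(\cdot,v_A)_A](\nu) = [\cQ h(\nu,\cdot)(v_A)]_A$ follows from the given duality and linearity of marginalisation alone.
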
 

\begin{proof}
We will use~\cite[Proposition 1.2]{kurtjansen}\footnote{Strictly speaking, their result was proved only for scalar valued duality functions. Alternatively, one could invoke~\cite[Prop.~2.10]{EundK}; note that $H$ is continuous.}. Since the semigroups of both processes are Feller, it suffices to show that
\begin{equation*}
\cG H(\cA,v; \cdot) (\nu) = \cL H (\cdot, \nu) \big ( (\cA,v)  \big ) 
\end{equation*}
for all $(\cA,v) \in F$ and $\nu \in \cP(X)$. By the product rule, it suffices to study the action of $\cG$ on the individual factors that make up $H$. Thus, we start by computing $\cG h (y,\cdot) (\nu)$ for arbitrary $y \in E$.
Observing that $\Psi_\delta^{(0)} (\nu) = \nu + \delta \psi(\nu) + O(\delta^2)$, we see that
\begin{equation*}
\begin{split}
\Psi_\delta(\nu) &=  \nu + \delta \sum_{\cB \in \bP(S)} \varrho_\cB^{} \big (\cR_\cB (\nu) - \nu \big) + \delta \psi (\nu)  + O(\delta^2)\\ 
&= \Big ( 1 - \delta \sum_{\cB \in \bP(S)} \varrho_\cB^{} \Big ) \Psi^{(0)}_\delta(\nu) + \delta \sum_{\cB \in \bP(S)} \varrho_\cB^{} \Psi^{(0)}_\delta(\nu)_{B_\bullet^{}}^{} \otimes \bigotimes_{B \in \cB^\circ} \nu_B^{} + O(\delta^2).
\end{split}
\end{equation*}
Together with the assumed differentiability of $h$ in the second argument, this implies 
\begin{equation*}
h \big (y,\Psi_\delta^{} (\nu) \big ) = h \Big (v, \Big (1 - \delta \sum_{\cB \in \bP(S)} \varrho_\cB^{} \Big ) \Psi^{}_\delta (\nu) +  \delta \sum_{\cB \in \bP(S)} \varrho_\cB^{} \Psi^{}_\delta(\nu)_{B_\bullet^{}} \otimes
\bigotimes_{B \in \cB^\circ} \nu_B^{} \Big ) + o (\delta).
\end{equation*}
Invoking first Assumption~\ref{ass:hassumption}\ref{item:hlinearity} and then Assumption~\ref{ass:hassumption}\ref{item:multiplicativity}, this becomes
\begin{equation*}
h \big ( y, \Psi_\delta^{} (\nu) \big ) = \Big ( 1 - \delta \sum_{\cB} \varrho_\cB^{} \Big ) h \big (v,\Psi_\delta^{}(\nu) \big) 
+ \delta \sum_{\cB} \varrho_\cB^{} \, h \big (v,\Psi_\delta^{}(\nu) \big )_{B_\bullet^{}} \otimes \bigotimes_{B \in \cB^\circ} h(\emptyset, \nu)_B + O(\delta^2).
\end{equation*}
Subtracting $h(y,\nu)$ on both sides, dividing by $\delta$ and subsequently letting $\delta \to 0$, we see that
\begin{equation*}
\begin{split}
\cG h(y,\cdot) (\nu) &= \cG^{(0)} h (y,\cdot) (\nu) + \sum_{\cB} \varrho_\cB^{} \Big ( h(y,\nu)_{B_\bullet^{}} \otimes \bigotimes_{B \in \cB^\circ} h(\emptyset, \nu)_B - h(y,\nu) \Big ) \\
&= \cQ h(\cdot,\nu)(y) + \sum_{\cB} \varrho_\cB^{}  \Big ( h(y,\nu)_{B_\bullet^{}} \otimes \bigotimes_{B \in \cB^\circ} h(\emptyset, \nu)_B - h(y,\nu) \Big ),
\end{split}
\end{equation*}
where the second equality follows from an application of the duality $(\omega^{(0)}, Y, h)$.

Keeping in mind that $\cG$ is a differential operator, we obtain by the product rule that 
\begin{equation*} 
\begin{split}
\cG &H(\cA,v;\cdot) (\nu) \\
&= \sum_{j = 1}^{|\cA|} \cQ_i H(\cA,\cdot ; \nu) (v_{A_j}^{})  \\
& \, \, \, \, \, + \sum_{A \in \cA} \sum_{\cB} \varrho_\cB^{} \Big (   H(\cA,v;\nu)_{S \setminus A} \otimes h(v_A^{},\nu)_{A \cap B_\bullet^{}}  \bigotimes_{B \in \cB^\circ} h(\emptyset, \nu)_{A \cap B} - H(\cA,v;\nu)   \Big ) \\ 
&=  \sum_{j = 1}^{|\cA|} \cQ_i H(\cA,\cdot ; \nu) (v_{A_j}^{}) \\
& \, \, \, \, \, + \sum_{A \in \cA} \sum_{\cB} \varrho_\cB^{}  \Big ( H\big ((\cA,v) \boxwedge_A \cB,\nu \big ) - H(\cA,v;\nu) \Big ) \\
&= \cL H(\cdot,\nu) \big ( (\cA,v)  \big ).
\end{split} 
\end{equation*}
\end{proof}

\section{The case of single-crossover}\label{sec:singlecrossover}
After having proved the general duality between the $\psi$-recombination equation and the gLPP, we explore the special case of single-crossover recombination. In this case, it turns out that the gLPP can be reduced to a collection of independent copies of $Y$, with
an additional starting/resetting mechanism. This simultaneously generalises both the representation of the weighted partitioning process from~\cite{SelrekDocumenta} in terms of a family of Yule processes with initiation and resetting (YPIR) and the duality to a collection of independent initiation processes.  As in~\cite{SelrekDocumenta}, a crucial ingredient is the following partial order on the set of sequence sites.

\begin{definition}\label{def:porder}
For any $i,j \in S$, we say that $i$ \emph{precedes} $j$ ($j$ \emph{succeeds} $i$) and write $i \preccurlyeq j$, if and only if either $i_\bullet^{} \leqslant i \leqslant j \leqslant n$ or
$1 \leqslant j \leqslant i \leqslant i_\bullet^{} \leqslant n$. In words, $i \preccurlyeq j$ means that $j$ is more distant from the active site than $i$. Note this entails that $i_\bullet^{} \preccurlyeq j$ for any $j \in S$.
\end{definition}

Using this partial order, any $i \in S^\circ$ defines a partition of $S$ into two blocks, namely the set $D_i$ of all sites that succeed $i$, and its complement, which we denote by $C_i^{}$. Clearly, $C_i$ is the head of the partition $\{C_i,D_i\}$ in the sense of Definition~\ref{def:head}, whence we call $C_i$ the $i$-head for short. The tail is given by the partition $\{D_i\}$ consisting of a single block $D_i$, which, by slight abuse of notation, we will refer to as the $i$-tail, in line with the nomenclature in~\cite{SelrekDocumenta}; see Fig.~\ref{fig:headsntails} for an illustration. More explicitly, we have
\begin{equation*}
(C_i,D_i) = \begin{cases}
\big ([1,\ldots,i-1],[i,\ldots,n] \big ) \textnormal{ if } i > i_\bullet^{}, \\
\big ([i+1,\ldots,n],[1,\ldots,i] \big )  \textnormal{ if } i < i_\bullet^{}.
\end{cases}
\end{equation*}
\begin{figure}[t]
\includegraphics[width=0.85\textwidth]{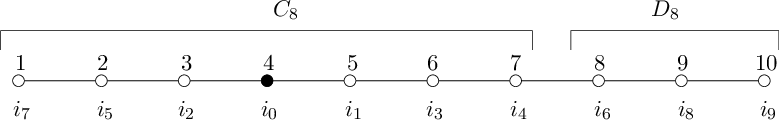}
\caption{\label{fig:headsntails}
A sequence of length $n=10$ where the active site is $i_\bullet^{} = 4$. The $8$-tail is given by $D_8 = \{8,9,10\}$, and the $8$-head by 
$C_8 = \{1,2,3,4,5,6,7\}$. The indices below the sequence indicate a non-decreasing reordering of the sites in the sense of Definition~\ref{def:porder}.
}
\end{figure}
With this, Eq.~\eqref{psirecoeq} can be rewritten as
\begin{equation*}
\dot \omega_t^{} = \psi(\omega_t^{}) + \sum_{i \in S^\circ} \varrho_i^{} \big ( \omega_{C_i,t}^{} \otimes \omega_{D_i,t}^{} - \omega_t^{} \big ),
\end{equation*}
where we abbreviated $\varrho_{\{C_i,D_i\}}^{}$ as $\varrho_i^{}$. 

It is clear that under the assumption of single-crossover, the gLPP takes values in the labelled \emph{interval} partitions of $S$. The aforementioned reformulation of the gLPP relies on a bijection between labelled interval partitions and labellings of the sites by elements of 
$\widetilde E \defeq E \cup \{\Delta\}$, given in Definition~\ref{def:bijection} and illustrated in Fig.~\ref{fig:illustration_of_encoding}; the new symbol $\Delta$ encodes the block structure.

\begin{definition}\label{def:bijection}
Given $(y_i^{})^{}_{i \in S}$ with $y_i^{} \in E \cup \{\Delta\}$ for all $i$, we construct a partition by splitting the set of sequence sites at those sites that are assigned elements of $E$. More precisely, we set
\begin{equation*}
\cA \defeq \bigwedge_{i \in S^\circ, \, y_i^{} \neq \Delta} \{C_i,D_i\},
\end{equation*}
where $\cB \wedge \cC \defeq \{B \cap C : B \in \cB,\, C \in \cC \} \setminus \{\varnothing\}$ denotes the coarsest common refinement of the partitions 
$\cB$ and $\cC$. 
By construction, each block $A$ of $\cA$ contains a unique site $i_A^{}$ that is labelled by an element of $E$, namely 
$i_A^{} = \min (A)$ in the sense of the partial order $\preccurlyeq$. Thus, $y_{i_A^{}}^{}$ becomes the label of block $A$. 

Conversely, starting from a labelled interval partition, we assign the label of the head of the partition to $i_\bullet^{}$. Then, for each remaining block, we assign the label of that block to the minimum (again in the sense of the partial order `$\preccurlyeq$') of that block. Finally, we assign a `$\Delta$' to each remaining site.  
\end{definition}

This correspondence allows us to translate the gLPP into a collection of processes $Y_{i} = (Y_{i,t})_{t \geqslant 0}$, $i \in S$, which take values in the augmented state space $\widetilde E$ for $i \in S^\circ$, and in $E$ for $i = i_\bullet^{}$. To understand their evolution, we start by recalling that as $Y_{i_\bullet^{}}$ represents the label of the head of the partition, we have $Y_{i_\bullet^{}} = Y$ in distribution; see Remark~\ref{rmk:onlyacopy}. For $i \in S^\circ$, the following transitions can occur, given that the current state is $y_i^{}$.
\begin{enumerate}
\item
If $y_i^{} \in E$, let $A_i$ be the block to which $i$ belongs. Then, the label of block $A_i$ is reset, i.e. $y_i^{}$ jumps to $\emptyset$, with rate $r_{A_i}^{}$. Because $i$ is the minimum of $A_i$ with respect to the partial order `$\preccurlyeq$', this rate depends only on $i$. Indeed,
\begin{equation*}
r_{A_i}^{}  = \sum_{j \in S^\circ : A_i \subseteq D_j} \varrho_j^{} = \sum_{j \in S^\circ : j \preccurlyeq i} \varrho_j^{} \eqdef r_i^{}.
\end{equation*}
So, when in $E$, the processes $Y_i^{}$, $i \in S^\circ$, jump to $\emptyset$, independently of each other and at rate $r_i^{}$, respectively. \newpage
\item
If $y_i^{} = \Delta$, then $i$ belongs to a block $A$ whose label is given by $y_j^{}$, where $j$ is the maximal site with respect to $\preccurlyeq$ for which
$j \preccurlyeq i$ and $y_j^{} \in E$. At rate $\varrho_i^{}$, this block is, independently of all others, subdivided into two pieces, $A \cap C_i$ and $A \cap D_i$. Recall that $A \cap C_i$ inherits the label $y_j^{}$ from $A$ and the label of $A \cap D_i$ is $\emptyset$. Now, $j$ is the minimum of $A \cap C_i$, and $i$ is the minimum of $A \cap D_i$. Thus, $y_j^{}$ remains unchanged and $y_i^{}$ jumps to $\emptyset$. 
\end{enumerate}
To summarise:
\begin{lemma}\label{lem:restartedYs}
The stochastic processes $Y_i$, as defined above, are independent Markov processes. More precisely, $Y_{i_\bullet}^{}$ is a copy of $Y$, while the generator of $Y_i$ for $i \neq i_\bullet^{}$ is given by 
\begin{equation*}
\widetilde \cQ_i \varphi(y) \defeq \begin{cases}
\cQ \varphi (y) + r_i^{} \big ( \varphi(\emptyset) - \varphi(y) \big ) & \textnormal{ if } y \in E,\\
\varrho_i^{} (\varphi(\emptyset) - \varphi(\Delta) \big ) & \textnormal{ if } y = \Delta,
\end{cases}
\end{equation*}
and is defined for all measurable functions $\varphi$ on $E \cup \{\Delta\}$ such that $\varphi|_E^{} \in \cD(\cQ)$.
\qed
\end{lemma}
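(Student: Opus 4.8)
The plan is to transport the gLPP through the site-labelling encoding of Fig.~\ref{fig:illustration_of_encoding} and check that the transported generator splits as a sum of single-site generators. Under single-crossover only the rates $\varrho_i := \varrho_{\{C_i,D_i\}}$, $i \in S^\circ$, survive, so in $\cL$ each $\cB$ may be taken of the form $\{C_i, D_i\}$. Let $\widehat E := E \times \widetilde E^{S^\circ}$ carry the product topology ($\Delta$ being isolated in $\widetilde E$). First I would record that the encoding is a bijection $\Phi$ from $\widehat E$ onto the set $F_{\mathrm{int}} \subseteq F$ of labelled \emph{interval} partitions, and a homeomorphism: decomposing $\widehat E$ according to the (clopen) value of $\{i \in S^\circ : y_i \neq \Delta\}$, on each piece $\Phi$ is just a relabelling of coordinates onto the clopen stratum $\{\cA\} \times E^\cA$ of the corresponding interval partition $\cA$ --- this is routine, exactly as in the proof of Lemma~\ref{lem:existence}. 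Since $(\cA,v) \boxwedge_A \{C_i,D_i\}$ maps interval partitions to interval partitions, the gLPP stays in $F_{\mathrm{int}}$, so $\widehat\Sigma := \Phi^{-1}(\Sigma^{(0)},V)$ is a Feller process on $\widehat E$ with some generator $\widehat\cL$, and its $i$-th coordinate is by construction $Y_i$. The statement then reduces to the identity
\[
\widehat\cL \;=\; \cQ^{(i_\bullet)} + \sum_{i \in S^\circ} \widetilde\cQ_i^{(i)},
\]
the right-hand side being a sum of generators each acting on a single coordinate ($\cQ^{(i_\bullet)}$ on $y_{i_\bullet}$, $\widetilde\cQ_i^{(i)}$ on $y_i$); by the standard fact that a Markov process whose generator has this form and whose initial law is a product measure --- in particular a point mass --- has independent coordinate processes with the respective marginal generators, this yields the lemma, with $Y_{i_\bullet}$ a copy of $Y$.

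To prove the displayed identity I would handle the two parts of $\cL$ separately. The part $\sum_j \cQ_j$ applies $\cQ$ to every block-label $v_A$; under $\Phi$ the label of the block with $\preccurlyeq$-minimum $k$ is the coordinate $y_k$, and --- using that the gLPP only refines partitions, so a block-minimum remains a block-minimum --- the set of block-minima is $\{i_\bullet\} \cup \{i \in S^\circ : y_i \in E\}$. Hence this part becomes $\cQ$ on $y_{i_\bullet}$ and $\cQ$ on each $y_i$ with $y_i \in E$, which is precisely the $\cQ\varphi(y)$-clause of $\widetilde\cQ_i$, while a $\Delta$-coordinate is left untouched.

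For the recombination part $\sum_{A \in \cA} \sum_{i \in S^\circ} \varrho_i \big(\varphi((\cA,v) \boxwedge_A \{C_i,D_i\}) - \varphi((\cA,v))\big)$ I would fix a block $A$ with $\preccurlyeq$-minimum $k$ and classify the effect of $\boxwedge_A \{C_i,D_i\}$, using that $A$ is a $\preccurlyeq$-interval, that $D_i = \{j : i \preccurlyeq j\}$ is $\preccurlyeq$-up-closed with $i \in D_i$, and that $i_\bullet \in C_i$ for all $i \in S^\circ$. There are three cases: (i) $i \in A$ and $i \neq k$ (equivalently $y_i = \Delta$, with $i \in A$), where $A$ is genuinely split into $A \cap C_i \ni k$ (keeping the label $y_k$) and $A \cap D_i \ni i$ (receiving $\emptyset$, since it avoids $i_\bullet$), so that $y_k$ is unchanged and $y_i \mapsto \emptyset$ at rate $\varrho_i$; since each $i$ with $y_i = \Delta$ lies in a unique block, summing over $A$ gives $y_i \mapsto \emptyset$ at rate $\varrho_i$ whenever $y_i = \Delta$; (ii) $A \subseteq D_i$, equivalently $i \preccurlyeq k$, where $A$ is kept but relabelled by $\emptyset$, i.e.\ $y_k \mapsto \emptyset$, and summing over such $i$ gives the rate $\sum_{i \in S^\circ,\, i \preccurlyeq k} \varrho_i = r_k = r_A$; (iii) $A \subseteq C_i$, which is silent. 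Since the head is only ever in case (iii) (as $i_\bullet \notin D_i$ for $i \in S^\circ$), the coordinate $y_{i_\bullet}$ is never affected, and for $i \in S^\circ$ what remains is exactly the non-$\cQ$ part of $\widetilde\cQ_i$: the jump $y_i \mapsto \emptyset$ at rate $\varrho_i$ if $y_i = \Delta$ and at rate $r_i$ if $y_i \in E$. Combined with the previous paragraph this gives the displayed identity.

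The computation is routine bookkeeping; the two facts where single-crossover is genuinely used --- and the only real content --- are (a) the \emph{dynamic} consistency of the encoding, namely that the gLPP only refines partitions, so that a site which has become a block-minimum stays one and its coordinate never returns from $E$ to $\Delta$, and (b) the identity $r_A = r_{\min A}$, which rests on blocks being $\preccurlyeq$-intervals (more precisely, on $\min_\preccurlyeq A$ existing with $A \subseteq D_{\min_\preccurlyeq A}$) and on $D_i$ being $\preccurlyeq$-up-closed. The topological part --- that $\Phi$ is a homeomorphism, so that Feller-ness and the generator transport cleanly --- is cumbersome but entirely routine, in the spirit of Lemma~\ref{lem:existence}, because $\Delta$ is an isolated point of $\widetilde E$.
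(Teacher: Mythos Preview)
Your proposal is correct and follows essentially the same approach as the paper, which proves the lemma via the discussion immediately preceding it (the lemma statement ends with \qed). Both arguments use the bijection between labelled interval partitions and site-labellings by $\widetilde E$, and both analyze the effect of a $\{C_i,D_i\}$-splitting on the site-labels by cases; the paper's items~(1) and~(2) are your cases~(ii) and~(i), your case~(iii) is what the paper silently treats as a null transition, and the key computation $r_{A} = r_{\min_\preccurlyeq A}$ appears verbatim in both. Your write-up is simply more formal --- you make the homeomorphism $\Phi$ explicit, transport the generator through it, and phrase the conclusion as the identity $\widehat\cL = \cQ^{(i_\bullet)} + \sum_{i \in S^\circ} \widetilde\cQ_i^{(i)}$ together with the standard consequence for independence --- whereas the paper leaves these steps implicit.
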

It is not hard to see that this generator defines a unique Feller process; for details, see the proof of Lemma~\ref{lem:existence} in the appendix.
\begin{example}
\begin{enumerate}
\item
When considering a deterministic flow as in Example~\ref{ex:semigroup}, the $Y_i^{}$ are precisely the \emph{initiation processes} introduced in \cite{SelrekDocumenta}. Namely, starting from $\Delta$, $Y_i^{}$ is started (or `initiated') at rate $\varrho_i^{}$, meaning that it moves from $\Delta$ to $0$. After that, it is reset at rate $r_i^{}$ and $Y_{i,t}$ is simply the time that has passed after the last resetting. 
\item
In the case of the selection-recombination equation, when $Y$ is the line-counting process of the binary Yule process as in Example~\ref{ex:pureselection}, then the $Y_i$ are the \emph{Yule processes with initiation and resetting}, short YPIR, also introduced in \cite{SelrekDocumenta}. Upon initiation, $Y_i^{}$ starts to undergo binary branching at rate $s$, and is at rate $r_i^{}$ reduced to a single line and thus `reset'.
\end{enumerate}
\end{example}

\begin{figure}[t]
\setlength{\unitlength}{1.3mm}
\begin{center}
  \begin{picture}(12,15)
    \put(-40,0){$(y_i^{})_{i \in S}^{}$}
    \put(-40,5){$\mathcal{A}$}
    \put(-40,10){$(v_A^{})_{A \in \cA}^{}$}
    \put(-13,10){$y_3^{}$}
    \put(-0.5,10){$y_5^{}$}
    \put(12,10){$y_6^{}$}
    \put(24.5,10){$y_9^{}$}
    \put(-23,5){$\big \{$}
    \put(-18,5){$1$}
    \put(-13,5){$2$}
    \put(-8,5){$3$}
    \put(-3,5){$4$}
    \put(2,5){$\underline{5}$}
    \put(7,5){$6$}
    \put(12,5){$7$}
    \put(17,5){$8$}
    \put(22,5){$9$}
    \put(27,5){$10$}
    \put(32,5){$\big \}$}
    \put(-18,0){$\Delta$}
    \put(-13,0){$\Delta$}
    \put(-8,0){$y_3^{}$}
    \put(-3,0){$\Delta$}
    \put(2,0){$y_5^{}$}
    \put(7,0){$y_6^{}$}
    \put(12,0){$\Delta$}
    \put(17,0){$\Delta$}
    \put(22,0){$y_9^{}$}
    \put(27,0){$\Delta$}
    \put(-19.5,5){$\{$}
    \put(-6.5,5){$\}$}
    \put(-4.5,5){$\{$}
    \put(3.5,5){$\}$}
    \put(5.5,5){$\{$}
    \put(18.5,5){$\}$}
    \put(20.5,5){$\{$}
    \put(29.75,5){$\}$}
  \end{picture} 
\end{center}
\caption{\label{fig:illustration_of_encoding}
Encoding  a labelled partition (upper two rows) by a labelling of the sites (bottom row). The selected site is underlined.}
\end{figure}

In analogy with \cite[Prop.~8.1]{SelrekDocumenta}, we can state the semigroup of $Y_i$ explicitly in terms of the semigroup $(P^Y_t)_{t \geqslant 0}^{}$ 
of $Y$. 
\begin{prop}
For $i \in S^\circ$, let $T_i$ and $S_i$ be exponentially distributed random variables with parameters $r_i^{}$ and $\varrho_i^{}$. Then, the semigroup of $Y_i$ is given by
\begin{equation*}
P^i_t (y,\cdot) = \begin{cases}
\EE \big [ \one_{T_i \geqslant t} Q^{}_t (y,\cdot) + \one_{T_i < t} Q^{}_{T_i} (\emptyset, \cdot) \big ] &\textnormal{ if $y \in E$}, \\
\EE \big [ \one_{S_i \geqslant t} \delta_\Delta(\cdot)+ \one_{S_i < t} Q^{}_{T_i \wedge t} (\emptyset, \cdot) \big ] &\textnormal{ if $y = \Delta$},
\end{cases}
\end{equation*}
where we recall that $Q = (Q_t)_{t \geqslant 0}$ is the transition semigroup of $Y$. In the degenerate case where $r_i^{} = \varrho_i^{} = 0$,
we let $T_i \equiv S_i \equiv \infty$ and the formula above reduces to
\begin{equation*}
P^i_t(y,\cdot) = \begin{cases}
P^Y_t (y,\cdot) & \textnormal{ if $y \in E$}, \\
 \delta_\Delta(\cdot) & \textnormal{ if $y = \Delta$}.
\end{cases}
\end{equation*}
\end{prop}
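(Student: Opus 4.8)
The plan is to identify the semigroup $(P^i_t)_{t\geq 0}$ of $Y_i$ directly from its dynamics, for fixed $i\in S^\circ$. Write $Q=(Q_t)_{t\geq 0}$ for the transition semigroup of $Y$ and let $\pi_\emptyset$ be the constant operator $\pi_\emptyset\varphi\defeq\varphi(\emptyset)$; then on $E$ one has $\widetilde\cQ_i=\cQ+r_i(\pi_\emptyset-\mathrm{id})$, while $\widetilde\cQ_i\varphi(\Delta)=\varrho_i\big(\varphi(\emptyset)-\varphi(\Delta)\big)$. Since $\widetilde\cQ_i$ generates a \emph{unique} Feller process (as recorded after Lemma~\ref{lem:restartedYs}), it suffices either to check that the claimed right-hand sides satisfy $\partial_t P^i_t\varphi=\widetilde\cQ_i P^i_t\varphi$ with $P^i_0=\mathrm{id}$, or to exhibit one realisation of $Y_i$ and read off its semigroup. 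I would use the following pathwise description, which matches $\widetilde\cQ_i$ by inspection (the standard jump-type construction): started from $y\in E$, the process runs as $Y$ and is reset to $\emptyset$ at the points of an independent rate-$r_i$ Poisson process $N$ on $[0,\infty)$; started from $\Delta$, it holds there an $\mathrm{Exp}(\varrho_i)$ time $S_i$, jumps into $\emptyset$, and thereafter evolves by the preceding rule.

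\textbf{The case $y\in E$.} I would condition on the restriction of $N$ to $[0,t]$. If $N$ has no point in $[0,t]$ (probability $e^{-r_it}$), the process has followed $Y$ from $y$, so $Y_{i,t}\sim Q_t(y,\cdot)$. If the last point of $N$ in $[0,t]$ lies at $\sigma$, then $Y_i$ was reset to $\emptyset$ at $\sigma$ and not afterwards on $(\sigma,t]$, so $Y_{i,t}\sim Q_{t-\sigma}(\emptyset,\cdot)$ conditionally on $N$. Time reversal/memorylessness of the Poisson process gives $\PP(\text{no point in }(t-s,t])=e^{-r_is}$ for $0\leq s<t$; hence the backward recurrence time $t-\sigma$ — set to $t$ on the no-point event — is distributed as $\min(T_i,t)$ with $T_i\sim\mathrm{Exp}(r_i)$, the no-point event is $\{T_i\geq t\}$, and on its complement $t-\sigma\stackrel{\mathrm{d}}{=}T_i$. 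Taking expectations yields $P^i_t(y,\cdot)=\EE\big[\one_{T_i\geq t}Q_t(y,\cdot)+\one_{T_i<t}Q_{T_i}(\emptyset,\cdot)\big]$. The same identity can be obtained analytically: conditioning on the first reset time $\tau_1\sim\mathrm{Exp}(r_i)$ and using the strong Markov property yields the measure-valued renewal identity
\[
P^i_t(y,\cdot)=e^{-r_it}Q_t(y,\cdot)+\int_0^t r_i e^{-r_is}\,P^i_{t-s}(\emptyset,\cdot)\,\dd s ;
\]
solving it for $y=\emptyset$ — the unique locally bounded solution being $e^{-r_it}Q_t(\emptyset,\cdot)+\int_0^t r_i e^{-r_iu}Q_u(\emptyset,\cdot)\,\dd u$, as a direct substitution confirms — and feeding the result back into the identity above gives the formula for every $y\in E$.

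\textbf{The case $y=\Delta$ and the degenerate case.} I would condition on the stopping time $S_i\sim\mathrm{Exp}(\varrho_i)$: on $\{S_i\geq t\}$ the process is still at $\Delta$, contributing $\delta_\Delta$, while on $\{S_i<t\}$ it sits at $\emptyset$ from time $S_i$ onward, so by the Markov property $Y_{i,t}\sim P^i_{t-S_i}(\emptyset,\cdot)$ given $S_i$; inserting the $y\in E$ formula for $P^i_{t-S_i}(\emptyset,\cdot)$ (with an independent fresh $T_i$) and taking the outer expectation over $S_i$ produces the stated expression for $P^i_t(\Delta,\cdot)$. If $r_i=\varrho_i=0$, then $\widetilde\cQ_i$ equals $\cQ$ on $E$ and $\Delta$ is absorbing, so $P^i_t(y,\cdot)=Q_t(y,\cdot)$ for $y\in E$ and $P^i_t(\Delta,\cdot)=\delta_\Delta$; this is also the $T_i,S_i\to\infty$ degeneration of the general formula.

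\textbf{Main obstacle.} The only genuinely non-routine step is pinning down the law of the backward recurrence time $t-\sigma$ — equivalently, in the analytic route, the uniqueness and explicit solution of the renewal equation; both are classical. The remaining care is mostly bookkeeping: because $Q_t(y,\cdot)$ is $\cP(X)$-valued, the integrals and the random-time evaluations $Q_{T_i}(\emptyset,\cdot)$, $Q_u(\emptyset,\cdot)$ must be read after pairing against bounded measurable functions on $\widetilde E$ (or as Bochner integrals), which is legitimate since $Y$ is Feller and $(u,y)\mapsto Q_u(y,\cdot)$ is then jointly measurable and locally bounded; and one should record that $\tau_1$ and $S_i$ are genuine stopping times, so that the (strong) Markov property applies where it is invoked.
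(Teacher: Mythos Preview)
Your approach is precisely what the paper has in mind: its proof is a single sentence saying the formula ``is immediately obvious from the description of $Y_i$ in terms of its generator'' and pointing to the explicit construction in the appendix. Your pathwise construction (run $Y$ and reset to $\emptyset$ at the points of an independent rate-$r_i$ Poisson process; hold at $\Delta$ for an $\mathrm{Exp}(\varrho_i)$ time and then enter $E$ at $\emptyset$) is exactly that construction, and your backward-recurrence argument for $y\in E$ is correct and yields the stated formula.

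There is, however, a gap in your $y=\Delta$ step. You correctly obtain
\[
P^i_t(\Delta,\cdot)=\EE\big[\one_{S_i\geq t}\,\delta_\Delta(\cdot)+\one_{S_i<t}\,P^i_{t-S_i}(\emptyset,\cdot)\big],
\]
and substituting the $y\in E$ formula gives, with an independent $T_i'\sim\mathrm{Exp}(r_i)$,
\[
\EE\big[\one_{S_i\geq t}\,\delta_\Delta(\cdot)+\one_{S_i<t}\,Q_{\min(T_i',\,t-S_i)}(\emptyset,\cdot)\big].
\]
You then assert that this ``produces the stated expression'', namely $\EE\big[\one_{S_i\geq t}\,\delta_\Delta+\one_{S_i<t}\,Q_{T_i}(\emptyset,\cdot)\big]$, without showing why the truncation $\min(T_i',\,t-S_i)$ can be dropped. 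In general it cannot: take $Y$ to be the deterministic drift of Example~\ref{ex:semigroup}, so $Q_s(\emptyset,\cdot)=\delta_s$; for small $t$ the non-$\Delta$ mass you derive concentrates near $0$, whereas the stated formula places it according to an unrestricted $\mathrm{Exp}(r_i)$ law with mean $1/r_i$ independently of $t$. So either the proposition's $y=\Delta$ line contains a typo (the intended object is $Q_{\min(T_i,\,t-S_i)}(\emptyset,\cdot)$, equivalently $P^i_{t-S_i}(\emptyset,\cdot)$), or you owe an argument you have not supplied. Your derivation is the right one; just do not claim it collapses to the printed expression without flagging this discrepancy.
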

\begin{proof}
This is immediately obvious from the description of $Y_i$ in terms of its generator; again, the semigroup can be constructed explicitly, as in the proof of Lemma~\ref{lem:existence}.
\end{proof}

We conclude this abstract part of the paper by generalising the recursion for the solution of the selection-recombination equation to the $\psi$-recombination equation. 
For this, we fix a sequence $(i^{(k)})_{0 \leqslant k \leqslant n-1}^{}$, which is nondecreasing in the sense of the partial order `$\preccurlyeq$'
(see Fig.~\ref{fig:headsntails}), and set $\varrho^{(k)} \defeq \varrho_{i_k}^{}$. Note that this implies $i^{(0)} = i_\bullet^{}$ but otherwise, this sequence is in general not unique. An exception is the case $i_\bullet^{} \in \{1,n\}$, in which the partial order $\preccurlyeq$ is actually a \emph{total} order.

Now, we consider, for $0 \leqslant k \leqslant n-1$ the solutions of the $\psi$-recombination equation \emph{truncated at site} $k$, i.e. the solutions of the equations
\begin{equation}\label{truncatedeqs}
\dot \omega_t^{(k)} = \psi \big ( \omega_t^{(k)}  \big ) + \sum_{\ell = 1}^{k} \varrho^{(\ell)} \big ( \omega_{C^{(\ell)},t}^{(k)} \otimes \omega_{D^{(\ell)},t}^{(k)} - \omega_t^{(k)} \big ), \quad \omega^{(k)}_0 = \omega_0^{}.
\end{equation}
Note that this is consistent with our previous definition of $\omega^{(0)}_{}$.
\begin{theorem}\label{thm:recursion}
For \mbox{$1 \leqslant k \leqslant n-1$},  the solutions $\omega^{(k)}$ of the initial value problems in Eq.~\eqref{truncatedeqs} satisfy the following recursion.
\begin{equation*}
\omega_t^{(k)} = \ee^{-\varrho^{(k)} t} \omega_t^{(k-1)} + \omega_{C^{(k)},t}^{(k-1)} \otimes \int_0^t \varrho^{(k)} \ee^{-\varrho^{(k)} \tau} \omega_{D^{(k)},\tau}^{(k-1)} \dd \tau,
\end{equation*}
starting from the solution $\omega^{(0)}$ of 
\begin{equation*}
\dot \omega_t^{(0)} = \psi \big ( \omega_t^{(0)} \big ), \, \omega_0^{(0)} = \omega_0^{}.
\end{equation*}
\end{theorem}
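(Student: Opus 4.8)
The plan is to verify directly that the measure defined by the right-hand side of the claimed identity solves the initial value problem~\eqref{truncatedeqs} at level~$k$, and then to invoke uniqueness of solutions, which is guaranteed by the Lipschitz continuity of $\psi$ and of the recombinators. Throughout, abbreviate $r \defeq \varrho^{(k)}$, $C \defeq C^{(k)}$, $D \defeq D^{(k)}$, $\mu_t^{} \defeq \omega_t^{(k-1)}$, and set
\[
I_t^{} \defeq \int_0^t r \ee^{-r\tau} \mu_{D,\tau}^{} \dd \tau ,
\qquad
\nu_t^{} \defeq \ee^{-rt} \mu_t^{} + \mu_{C,t}^{} \otimes I_t^{} ,
\]
so that $I_t^{}$ is a measure on $X_D^{}$ of total mass $1-\ee^{-rt}$. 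First I would record two elementary facts: $\nu_0^{} = \mu_0^{} = \omega_0^{}$, and, by marginalising and using the consistency relation $(\eta_A^{})_B^{} = \eta_{A\cap B}^{}$,
\[
\nu_{A,t}^{} = \ee^{-rt}\mu_{A,t}^{} + \mu_{A\cap C,t}^{}\otimes\Bigl(\int_0^t r\ee^{-r\tau}\mu_{A\cap D,\tau}^{}\dd\tau\Bigr)
\qquad (A\subseteq S);
\]
in particular $\nu_{C,t}^{} = \mu_{C,t}^{}$ and $\nu_{D,t}^{} = \ee^{-rt}\mu_{D,t}^{} + I_t^{}$.

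Next I would differentiate $\nu_t^{}$ and insert the level-$(k-1)$ equation for $\mu$, obtaining $\dot\nu_t^{} = -r\ee^{-rt}\mu_t^{} + \ee^{-rt}\dot\mu_t^{} + (\dot\mu_t^{})_C^{}\otimes I_t^{} + r\ee^{-rt}\,\mu_{C,t}^{}\otimes\mu_{D,t}^{}$ with $\dot\mu_t^{} = \psi(\mu_t^{}) + \sum_{\ell=1}^{k-1}\varrho^{(\ell)}\bigl(\mu_{C^{(\ell)},t}^{}\otimes\mu_{D^{(\ell)},t}^{}-\mu_t^{}\bigr)$, and then expand the target right-hand side $\psi(\nu_t^{}) + \sum_{\ell=1}^{k}\varrho^{(\ell)}\bigl(\nu_{C^{(\ell)},t}^{}\otimes\nu_{D^{(\ell)},t}^{}-\nu_t^{}\bigr)$ term by term. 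The $\ell=k$ summand is immediate: from $\nu_{C,t}^{}=\mu_{C,t}^{}$ and $\nu_{D,t}^{}=\ee^{-rt}\mu_{D,t}^{}+I_t^{}$ one gets $r\bigl(\nu_{C,t}^{}\otimes\nu_{D,t}^{}-\nu_t^{}\bigr) = r\ee^{-rt}\bigl(\mu_{C,t}^{}\otimes\mu_{D,t}^{}-\mu_t^{}\bigr)$, which cancels exactly the terms $-r\ee^{-rt}\mu_t^{}$ and $r\ee^{-rt}\mu_{C,t}^{}\otimes\mu_{D,t}^{}$ in $\dot\nu_t^{}$. For $\psi(\nu_t^{})$ I would write $\nu_t^{} = \ee^{-rt}\mu_t^{} + (1-\ee^{-rt})\,(\mu_{C,t}^{}\otimes\widetilde I_t^{})$ with $\widetilde I_t^{}\defeq I_t^{}/(1-\ee^{-rt})\in\cP(X_D^{})$; since $\mu_t^{}$ and $\mu_{C,t}^{}\otimes\widetilde I_t^{}$ share the same marginal at $i_\bullet^{}$, Assumption~\ref{ass:psiassumption}\ref{item:linearity} followed by Assumption~\ref{ass:psiassumption}\ref{item:multiplicativity} (cf.~\eqref{onlyonthehead}), together with the fact that $\psi(\cdot)_C^{}$ depends on its argument only through the $C$-marginal, give $\psi(\nu_t^{}) = \ee^{-rt}\psi(\mu_t^{}) + \psi(\mu_t^{})_C^{}\otimes I_t^{}$; this accounts for $\ee^{-rt}\psi(\mu_t^{})$ coming from $\ee^{-rt}\dot\mu_t^{}$ together with the $\psi$-part of $(\dot\mu_t^{})_C^{}\otimes I_t^{}$.

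The remaining work is to match the $\ell<k$ summands, and here the combinatorics of the partial order~$\preccurlyeq$ becomes essential. Because the sequence $(i^{(j)})$ is non-decreasing for $\preccurlyeq$, each $i^{(\ell)}$ with $\ell<k$ is either on the same side of $i_\bullet^{}$ as $i^{(k)}$ with $i^{(\ell)}\preccurlyeq i^{(k)}$ --- whence $C^{(\ell)}\subseteq C$ and $D\subseteq D^{(\ell)}$ --- or on the opposite side --- whence $D^{(\ell)}\subseteq C$ and $D\subseteq C^{(\ell)}$. In both cases a short computation with the marginal formula for $\nu_t^{}$ and $(\eta_A^{})_B^{}=\eta_{A\cap B}^{}$ yields
\[
\nu_{C^{(\ell)},t}^{}\otimes\nu_{D^{(\ell)},t}^{}-\nu_t^{}
= \ee^{-rt}\bigl(\mu_{C^{(\ell)},t}^{}\otimes\mu_{D^{(\ell)},t}^{}-\mu_t^{}\bigr)
+ \Bigl(\bigl(\mu_{C^{(\ell)},t}^{}\otimes\mu_{D^{(\ell)},t}^{}\bigr)_C^{}-\mu_{C,t}^{}\Bigr)\otimes I_t^{},
\]
so that $\sum_{\ell<k}\varrho^{(\ell)}$ of the left-hand side equals $\ee^{-rt}\sum_{\ell<k}\varrho^{(\ell)}(\cdots)$ coming from $\ee^{-rt}\dot\mu_t^{}$ plus the recombination part of $(\dot\mu_t^{})_C^{}\otimes I_t^{}$. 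Collecting all contributions then shows $\dot\nu_t^{}$ equals the right-hand side of~\eqref{truncatedeqs} at level~$k$; since $\nu_0^{}=\omega_0^{}$, uniqueness gives $\nu_t^{}=\omega_t^{(k)}$. I expect the main obstacle to be precisely this $\ell<k$ bookkeeping: for each $\ell$ one must determine which block of $\{C^{(\ell)},D^{(\ell)}\}$ sits inside $C$ and which sits inside $D$ --- this is exactly where the compatibility of the splitting order with $\preccurlyeq$ is used --- and then apply $\bigl(\mu_{C^{(\ell)},t}^{}\otimes\mu_{D^{(\ell)},t}^{}\bigr)_C^{}=\mu_{C\cap C^{(\ell)},t}^{}\otimes\mu_{C\cap D^{(\ell)},t}^{}$ together with $(\eta_A^{})_B^{}=\eta_{A\cap B}^{}$ to collapse the expression. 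The reduction of $\psi$ to the head-marginal used in the previous step is the only other ingredient; like the analogous property of $h$ in the proof of Theorem~\ref{thm:duality}, it is a consequence of Assumption~\ref{ass:psiassumption}.
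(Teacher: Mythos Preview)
Your argument is correct in outline but takes a genuinely different route from the paper. The paper does \emph{not} differentiate the candidate and match terms; instead it first proves the two-locus case $n=2$, $i_\bullet^{}=1$ via the stochastic representation of Theorem~\ref{thm:duality} (with the flow duality of Example~\ref{ex:semigroup}), reading off the formula from the last splitting time of the gLPP, and then reduces the general step $k$ to this two-locus case by relabelling: one sets $\widetilde S=\{1,2\}$, $\widetilde X_1=X_{C^{(k)}}$, $\widetilde X_2=X_{D^{(k)}}$, and absorbs all recombination at levels $\ell<k$ into a new drift $\widetilde\psi$, then checks that $\widetilde\psi$ again satisfies Assumption~\ref{ass:psiassumption}. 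Your direct ODE verification is more elementary and self-contained (no duality machinery), while the paper's reduction makes the inductive structure transparent and ties the recursion to the genealogical picture developed earlier. Interestingly, the combinatorial case distinction you flag as the ``main obstacle'' --- either $C^{(\ell)}\subseteq C^{(k)}$ or $D^{(\ell)}\subseteq C^{(k)}$ --- is \emph{exactly} the computation the paper carries out when verifying Assumption~\ref{ass:psiassumption} for $\widetilde\psi$; so the two proofs share their technical core.

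One point in your argument deserves care. To match the $\psi$-contributions you need $\psi(\mu_{C,t}\otimes\widetilde I_t)_C=\psi(\mu_t)_C$, which you justify by asserting that ``$\psi(\cdot)_C$ depends on its argument only through the $C$-marginal''. Strictly speaking, Assumption~\ref{ass:psiassumption} as stated only constrains $\psi$ on product measures (part~\ref{item:multiplicativity}) and gives affine behaviour on $\{i_\bullet^{}\}$-slices (part~\ref{item:linearity}); together these do \emph{not} force $\psi(\nu)_C$ to be determined by $\nu_C$ for general (non-product) $\nu$. The identity~\eqref{onlyonthehead} that you cite concerns $\psi(\cR_\cA(\nu))$, i.e.\ $\psi$ evaluated at a product, not at $\nu$ itself. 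The paper's route avoids this issue because it works with the flow $\Psi^{(0)}$ (via Example~\ref{ex:semigroup} and Assumption~\ref{ass:hassumption}) and only ever applies the multiplicativity hypothesis to product measures, both in Theorem~\ref{thm:duality} and when checking Assumption~\ref{ass:psiassumption} for $\widetilde\psi$. In all concrete cases treated in the paper the stronger marginal-consistency you use does hold, so your proof goes through there; but for the abstract statement you should either supply a short argument deriving it from Assumption~\ref{ass:psiassumption} (if possible) or add it as an explicit hypothesis.
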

\begin{proof}
We start with the case of two loci, i.e. $n=2$ and $i_\bullet^{} = 1$. Theorem~\ref{thm:duality} yields the stochastic representation
\begin{equation}\label{rawduality}
\omega_t^{(1)} = \EE \Big [ \bigotimes_{\sigma \in \Sigma^{(0)}_t} \Psi^{(0)}_{V_{\sigma,t}}(\omega_0^{})_\sigma^{} \, \Big | \, \Sigma_0^{(0)} = \{1,2\}, V_{\{1,2\},0} = 0 \Big ];
\end{equation}
recall that $\Psi^{(0)}$ is the flow associated with $\psi$. See also Example~\ref{ex:semigroup}. In this simple case, $\Sigma = (\Sigma^{(0)},V)$ evolves as follows. At rate $\varrho^{(1)}$, the sites $1$ and $2$ are split apart, and the label of site $1$ is reset to $0$, while the label of site $2$ remains unaffected. After that, the label of site $1$ is reset to $0$ at the same rate. Although time is positive, it is convenient to think of these events as the elements of a Poisson set $\Pi$, supported on the entire real line, with intensity $\varrho^{(1)} \dd t$; when $\Pi \cap (-\infty,t] \subseteq (-\infty,0]$, this means that no such event has occurred up to time $t$, and hence the block $\{1,2\}$ remains intact. Let $T \defeq t - \max_{\tau \in \Pi}^{}$. Then, $T$ is exponentially distributed with mean 
$1 / \varrho^{(1)}$ and 
\begin{equation*}
\Sigma_t^{} =
\begin{cases}
\big (\big \{ \{1\},\{2\} \big \},(t,T) \big ) & \text{ for } T < t, \\
(\{1,2\},t) & \text{ for } T \geqslant t.
\end{cases}
\end{equation*} 
Noting that $ \Psi^{(0)}_{V_{\sigma,t}}(\omega_0^{})_\sigma^{} = \omega^{(0)}_{\sigma,V_{\sigma,t}}$, Eq.~\eqref{rawduality} turns into
\begin{equation*}
\omega_t^{(1)} = \EE \big [ \one_{T \geqslant t} \, \omega_t^{(0)} + \one_{T < t} \, \omega^{(0)}_{\{1\},t} \otimes\omega^{(0)}_{\{2\},T} \big ] 
= \ee^{-\varrho^{(1)} t} \omega_t^{(0)} + \omega_{\{1\},t}^{(0)} \otimes \int_0^t \varrho^{(1)} \ee^{-\varrho^{(1)} \tau} \omega_{\{2\},\tau}^{(0)} \dd \tau,
\end{equation*}
which proves the claim for $n=2$ and $i_\bullet^{} = 1$. The general case follows from the following construction, which reduces the general case to the one we just proved. Let $n \geqslant 3$ and $1 \leqslant k \leqslant n-1$ be arbitary and set
  \begin{equation*}
\widetilde \psi (\nu) \defeq \psi (\nu) + \sum_{\ell = 1}^{k-1} \varrho^{(\ell)} \big (\nu_{C^{(\ell)}}^{} \otimes \nu_{D^{(\ell)}}^{} - \nu  \big ),
\end{equation*}
$
\widetilde S \defeq \{1,2\},
$
$
\widetilde X_1 \defeq X_{C^{(k)}}, \, \widetilde X_2 \defeq X_{D^{(k)}}, \, \widetilde \varrho \defeq \varrho^{(k)}, \, \widetilde X \defeq \widetilde X_1 \times \widetilde X_2,
$
$
\widetilde \omega \defeq \omega^{(k)},
$
and $\widetilde i_\bullet^{} = 1$.
With this, Eq.~\eqref{truncatedeqs} turns into
\begin{equation*}
\dot{\widetilde \omega}_t^{} = \widetilde \psi ( \widetilde \omega_t^{} ) + \widetilde \varrho \big ( \widetilde \omega_{\{1\},t}^{} \otimes 
\widetilde \omega_{\{2\},t}^{} - \widetilde \omega_t^{} \big ),
\end{equation*}
the $\widetilde \psi$-recombination equation for two sites. It remains to verify Assumption~\ref{ass:psiassumption} for $\widetilde \psi$.
First, we have for any $\nu, \nu' \in \cP(\widetilde X)$ with $\nu_{\{1\}}^{} = \nu'_{\{1\}}$ (that is, $\nu_{C^{(k)}}^{} = \nu_{C^{(k)}}^{}$ and hence 
$\nu_{\{i_\bullet^{}\}}^{} = \nu'_{\{i_\bullet^{} \}}$ in the original notation)
and $\alpha \in [0,1]$ that, 
by Assumption~\ref{ass:psiassumption}\ref{item:linearity} for $\psi$, $\psi \big (\alpha \nu + (1 - \alpha) \nu' \big ) = \alpha \psi(\nu) + (1 - \alpha) \psi(\nu')$. Furthermore, by the choice of the $i_k^{}$, we have for $1 \leqslant \ell \leqslant k-1$ that either $C^{(\ell)} \subseteq C^{(k)}$ (if $i_\ell^{}$ and $i_k^{}$ are on the same side of the selective site) or $D^{(\ell)} \subseteq C^{(k)}$ (if they are on different sides). The former implies that
$\nu_{C^{(\ell)}}^{} = \nu'_{C^{(\ell)}}$ and the latter that $\nu_{D^{(\ell)}}^{} = \nu'_{D^{(\ell)}}$. Thus, when evaluating
$\widetilde \psi \big (\alpha \nu + (1 - \alpha) \nu' \big )$, only one factor in the ocurring product measure is ever different from $\nu$, which implies that $\widetilde \psi \big (\alpha \nu + (1 - \alpha) \nu' \big ) = \alpha \widetilde \psi(\nu) + (1 - \alpha) \widetilde \psi(\nu)$. Moreover, the same reasoning together with assumption~\ref{ass:psiassumption}\ref{item:multiplicativity} on $\psi$ yields that $\widetilde \psi (\nu_{\{1\}} \otimes \nu'_{\{2\}}) = \widetilde \psi(\nu)_{\{1\}}^{} \otimes \nu_{\{2\}}^{}$. Therefore, $\widetilde \psi$ satisfies Assumption~\ref{ass:psiassumption}.
\end{proof}

\section{Application: The selection-mutation-recombination equation}
\label{sec:application}
We finish by applying our abstract results to the concrete example of the selection-mutation-recombination equation. In this section, $\psi$ is therefore of the form
\begin{equation*}
\psi = \psisel + \psimut,
\end{equation*}
where $\psi_{\textnormal{sel}}^{}$ and $\psi_{\textnormal{mut}}^{}$ describe the independent action of selection (at a single site) and mutation. We restrict (see Remark~\ref{rem:moregeneral}) ourselves to the case of binary sequences, so that $X = \{0,1\}^n$ throughout.
Explicitly, $\psi_{\textnormal{sel}}^{}$ is defined as in Example~\ref{ex:pureselection}, namely
\begin{equation}\label{psiseldef}
\psi_{\textnormal{sel}}^{} (\nu)(x) = s \big ( (1 - x_{i_\bullet^{}}^{}) \nu(x) - f(\nu) \nu(x) \big ), 
\end{equation}
where $f(\nu) \defeq \nu_{i_\bullet^{}}^{} (0)$ denotes the frequency of fit individuals. Intuitively, this means that an individual of type $x$ is considered to be \emph{fit} if $x_{i_\bullet^{}}^{} = 0$, and \emph{unfit} otherwise. Fit individuals reproduce at a higher rate $1+s$ (where $s > 0$) than unfit ones, which reproduce at rate $1$. Since we are working in a deterministic setting and assume that the population size remains constant, the neutral reproduction rate cancels out, and the net effect of the difference $s$ in reproduction rates is that any individual currently alive in the population is, at rate $s$ and regardless of its type, replaced by a randomly chosen fit individual. Defining the linear map $F : \RR^X \to \RR^X$ via 
\begin{equation*}
F \nu (x) \defeq (1 - x_{i_\bullet^{}}^{} ) \nu(x),
\end{equation*}
Eq.~\eqref{psiseldef} is equivalent to
\begin{equation*}
\psi_{\textnormal{sel}}^{} (\nu) = s(F\nu - f(\nu) \nu) = s (F - f(\nu) \id) \nu.
\end{equation*}
We recall the mutation term $\psi_{\textnormal{mut}}^{}$ given in Example~\ref{ex:mutation}.
\begin{equation*}
\psi_{\textnormal{mut}}^{} (\nu) \defeq \sum_{i \in S} u_i^{} \sum_{x \in X} \nu(x) \big ( m_{i,1} \delta_{(x_1^{},\ldots,x_{i-1}^{},1,x_{i+1}^{},\ldots,x_n^{})}^{}   +  m_{i,0} \delta_{(x_1^{},\ldots,x_{i-1}^{},0,x_{i+1}^{},\ldots,x_n^{})}^{} - \delta_x^{}  \big );
\end{equation*}
For general recombination, we will use Theorem~\ref{thm:duality} to obtain a stochastic representation of the solution and in the case of single-crossover, we iteratively construct an explicit solution formula via Theorem~\ref{thm:recursion}.

Unfortunately, Theorem~\ref{thm:duality} (and consequently, Theorem~\ref{thm:recursion}) does not apply immediately. As mutation can occur all over the genetic sequence, it is no longer true that a part of the genome that is statistically unlinked from $i_\bullet^{}$ remains unaffected by $\psi_{\textnormal{mut}}^{}$. Therefore, $\psi_{\textnormal{mut}}^{}$ (and consequently
$\psi$) violates Assumption~\ref{ass:psiassumption}\ref{item:multiplicativity}.

We overcome this slight inconvenience in two steps. First, we consider mutation at site $i_\bullet^{}$ only, in which case Assumption~\ref{ass:psiassumption}\ref{item:multiplicativity} will be satisfied. Secondly, we will take advantage of the linearity of the mutation term to add in mutation at the remaining sites.
So, we  decompose $\psimut$ as
\begin{equation*}
\psi_{\textnormal{mut}}^{} = \psi_{\textnormal{mut},\bullet} + \psi_{\textnormal{mut},S^\circ}^{},
\end{equation*}
where
\begin{equation*}
\psi_{\textnormal{mut},\bullet}(\nu) \defeq  u_{i_\bullet^{}}^{} \sum_{x \in X} \nu(x) \big ( m_{i_\bullet^{},1} \delta_{(x_1^{},\ldots,x_{i_\bullet^{}-1}^{},1,x_{i_\bullet^{}+1}^{},\ldots,x_n^{})}^{}   +  m_{i_\bullet^{},0} \delta_{(x_1^{},\ldots,x_{i_\bullet^{}-1}^{},0,x_{i_\bullet^{}+1}^{},\ldots,x_n^{})}^{} - \delta_x^{}  \big )
\end{equation*}
describes the mutation at $i_\bullet^{}$, while $\psi_{\textnormal{mut},S^\circ}^{}$ is given by
\begin{equation*}
\sum_{i \in S^\circ} u_i^{} \sum_{x \in X} \nu(x) \big ( m_{i,1} \delta_{(x_1^{},\ldots,x_{i-1}^{},1,x_{i+1}^{},\ldots,x_n^{})}^{}    +  m_{i,0} \delta_{(x_1^{},\ldots,x_{i-1}^{},0,x_{i+1}^{},\ldots,x_n^{})}^{} - \delta_x^{}  \big )
\end{equation*}
and describes mutation at the remaining sites.

We now show that $\psi_{\bullet}^{} \defeq \psi_{\textnormal{sel}}^{} + \psi_{\textnormal{mut},\bullet}^{}$ does indeed satisfy Assumption~\ref{ass:psiassumption}. 

\begin{lemma}\label{lem:psibulletisnice}
$\psi_{\bullet}^{}$ satisfies Assumption~\textnormal{\ref{ass:psiassumption}}.
\end{lemma}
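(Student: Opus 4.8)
The plan is to verify each of the two conditions in Assumption~\ref{ass:psiassumption} separately for $\psi_\bullet^{} = \psisel + \psimutac$, treating the selection and (site-$i_\bullet^{}$) mutation parts additively since both conditions are stable under sums. For condition~\ref{ass:psiassumption}\ref{item:linearity}, I would observe that $\psisel(\nu) = s\bigl(F\nu - f(\nu)\nu\bigr)$ where $F$ is linear and the only nonlinearity sits in the scalar $f(\nu) = \nu_{\{i_\bullet^{}\}}^{}(0)$. If $\mu$ and $\mu'$ agree on the $i_\bullet^{}$-marginal, then $f(\mu) = f(\mu') = f\bigl(\alpha\mu + (1-\alpha)\mu'\bigr)$, so $\nu \mapsto f(\nu)\nu$ becomes affine on that slice and $\psisel$ is affine there; the mutation term $\psimutac$ is outright linear in $\nu$, so it trivially respects the condition. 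Hence $\psi_\bullet^{}$ satisfies~\ref{ass:psiassumption}\ref{item:linearity}.

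For condition~\ref{ass:psiassumption}\ref{item:multiplicativity}, fix a product measure $\nu = \nu_C^{} \otimes \nu_D^{}$ with $\{C,D\} \in \bP(S)$ and $i_\bullet^{} \in C$. I would check each summand. For the selection term, one needs that $F$ respects the product structure and acts only on the $C$-factor: since $(F\nu)(x) = (1 - x_{i_\bullet^{}}^{})\nu(x)$ and $i_\bullet^{} \in C$, the multiplier $(1 - x_{i_\bullet^{}}^{})$ depends only on the $C$-coordinates, so $F(\nu_C^{} \otimes \nu_D^{}) = (F_C^{}\nu_C^{}) \otimes \nu_D^{}$ with $F_C^{}$ the analogous map on $\cP(X_C)$; combined with $f(\nu)$ depending only on $\nu_C^{}$ (indeed only on $\nu_{\{i_\bullet^{}\}}^{}$), one gets $\psisel(\nu_C^{} \otimes \nu_D^{}) = \psisel(\nu)_C^{} \otimes \nu_D^{}$. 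For the mutation-at-$i_\bullet^{}$ term, the key point is that flipping coordinate $i_\bullet^{}$ of $x$ only touches the $C$-part: $\delta_{(x_1^{},\ldots,1,\ldots,x_n^{})}^{} = \delta_{\tau_1^{}(x_C^{})}^{} \otimes \delta_{x_D^{}}^{}$ for a suitable map $\tau_1^{}$ on $X_C$, and likewise for the $0$-flip. Summing against $\nu(x) = \nu_C^{}(x_C^{})\nu_D^{}(x_D^{})$ and using $\sum_{x_D^{}} \nu_D^{}(x_D^{}) = 1$, the $D$-factor factors cleanly out, giving $\psimutac(\nu_C^{} \otimes \nu_D^{}) = \psimutac(\nu)_C^{} \otimes \nu_D^{}$. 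Adding the two identities yields~\ref{ass:psiassumption}\ref{item:multiplicativity} for $\psi_\bullet^{}$.

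The main obstacle, such as it is, is purely bookkeeping: one must be careful that the nonlinear scalar $f(\nu)$ and the linear operator $F$ both see \emph{only} the head block $C$ (this is exactly where $i_\bullet^{} \in C$ is used), and that the mutation operator at $i_\bullet^{}$ likewise only permutes the $C$-coordinates of the Dirac masses, so that the $D$-marginal passes through untouched in both terms. There is no genuine analytic difficulty — everything reduces to the elementary observation that an operation localized at site $i_\bullet^{}$ commutes with marginalization onto any block containing $i_\bullet^{}$, provided it is either linear (mutation) or affine after fixing the $i_\bullet^{}$-marginal (selection). I would present the computation compactly, doing the selection case in a line or two using the $F$-notation already set up in the paper and the mutation case by expanding the sum over $x = (x_C^{}, x_D^{})$ once.
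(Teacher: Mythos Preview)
Your proposal is correct and follows essentially the same approach as the paper: decompose $\psi_\bullet^{}$ into $\psisel$ and $\psimutac$, note that $\psimutac$ is linear so~\ref{item:linearity} is trivial, and verify~\ref{item:multiplicativity} for each summand by observing that both $F$ and the site-$i_\bullet^{}$ flip act only on the $C$-factor. The only difference is that the paper dispatches the $\psisel$ case by citing~\cite{SelrekTPB}, whereas you spell it out directly using the $F$-notation; your version is self-contained but otherwise identical in substance.
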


\begin{proof}
For $\psi_{\textnormal{sel}}^{}$, this was already shown in~\cite[Eqs.~(15) and (19)]{SelrekTPB}. So, we now focus on verifying this for 
$\psi_{\textnormal{mut},\bullet}^{}$. Since $\psi_{\textnormal{mut},\bullet}^{}$ is linear, Assumption~\ref{ass:psiassumption}\ref{item:linearity} is satisfied. To show that $\psi_{\textnormal{mut},\bullet}^{}$ satisfies Assumption~\ref{ass:psiassumption}\ref{item:multiplicativity}, we assume now that $\nu = \nu_C^{} \otimes \nu_D^{}$ and that $i_\bullet^{} \in C$. For any $x \in X$, we let $x^1_C \defeq (x^1_i)_{i \in C}$ ($x^0_C \defeq (x^0_i)_{i \in C}$) where $x^1_i = x_i$ ($x^0_i = x_i$) for $i \in C \setminus \{i_\bullet^{}\}$ and $x^1_{i_\bullet^{}} = 1$ ($x^0_{i_\bullet^{}} = 0$) . 
Then, we have
\begin{equation*}
\begin{split}
\psi_{\textnormal{mut},\bullet}^{} (\nu) &= \psi_{\textnormal{mut},\bullet}^{} (\nu_C^{} \otimes \nu_D^{} ) \\
 &= u_{i_\bullet^{}}^{} \sum_{x \in X} \nu_C^{} \otimes \nu_D^{} (x)\big ( m_{i_\bullet^{},1} \delta_{x^1_C} \otimes \delta_{x_D^{}} + m_{i_\bullet^{},0} \delta_{x^0_C} \otimes \delta_{x_D^{}} - \delta_{x_C^{}} \otimes \delta_{x_D^{}} \big ) \\
&= u_{i_\bullet^{}}^{} \sum_{x_C^{} \in X_C^{}} \sum_{x_D^{} \in X_D^{}} \nu_C^{} (x_C^{}) \nu_D^{} (x_D^{})\Big ( \big ( m_{i_\bullet^{},1} \delta_{x^1_C}  + m_{i_\bullet^{},0} \delta_{x^0_C} - \delta_{x_C^{}}  \big ) \otimes \delta_{x_D^{}} \Big ) \\
&=  u_{i_\bullet^{}}^{} \Big ( \sum_{x_C^{} \in X_C^{}} \nu_C^{} (x_C^{})  \big ( m_{i_\bullet^{},1} \delta_{x^1_C}  + m_{i_\bullet^{},0} \delta_{x^0_C} - \delta_{x_C^{}}  \big ) \Big ) \otimes \Big ( \sum_{x_D^{} \in X_D^{}} \nu_D^{} (x_D^{}) \delta_{x_D^{}} \Big ) \\
&= \psi_{\textnormal{mut},\bullet}^{} (\nu)_C^{} \otimes \nu_D^{}.
\end{split}
\end{equation*}
By linearity, the claim for $\psi_\bullet^{} = \psi_{\textnormal{mut},\bullet}^{} + \psi_{\textnormal{sel}}^{}$ follows immediately.
\end{proof}

We denote the solution of Eq.~\eqref{psirecoeq} with $\psi_\bullet^{}$ in place of $\psi$ by $\omega^\bullet$. 
We obtain a stochastic representation of $\omega^\bullet$ via the gLPP, using the deterministic flow as in Example~\ref{ex:semigroup}. That is, each block $A$ is labelled by the time 
$V_{A,t}^{}$ that has passed since the last ($\cB$-)recombination event hit a block containing $A$ such that $A \subseteq B$ for some $B \in \cB^\circ$. Note that always $V_{A,t}^{} = t$
if $A$ contains $i_\bullet^{}$. In this setting, the role of the function $h$ in the definition of $H$ in Theorem~\ref{thm:duality} is played by the deterministic flow
$(t,\nu) \mapsto \Psi^{(0)\bullet}_{t} (\nu)$ associated with $\psi_\bullet^{}$. Thanks to Lemma~\ref{lem:psibulletisnice}, Theorem~\ref{thm:duality} is applicable and yields the following stochastic representation
\begin{equation}\label{bulletduality}
\omega^\bullet_t = \EE  \Big [ \bigotimes_{A \in \Sigma_t^{(0)}} \Psi^{(0)\bullet}_{V_{A,t}}(\nu)_A^{} \, \Big \vert \, (\Sigma_0^{(0)}, V_0) = (\{S\},0) \Big ].
\end{equation}
We interpret $V_{A,t}^{}$ as the amount of time that the sequence of ancestors of the sites in $A$ has been evolving under selection and mutation at site $i_\bullet^{}$; see Example~\ref{ex:semigroup}. We can illustrate the evolution of this version of the partitioning process via a random graph, drawn from right to left; see Fig.~\ref{fig:ancestry}. This generalises the \emph{ancestral initiation graph} (AIG) that was originally introduced in~\cite{SelrekTPB} in the context of single-crossover recombination.
\begin{enumerate}[label=(\arabic*)]
\item \label{item:singleline}
We start by tracing back the ancestral line of an individual sampled from the population at present, that is, at time $t$. This individual is represented by a single node, also called the \emph{root} of the graph. Its ancestral line, starting at the root, grows from right to left at unit speed. Because this line contributes the alleles at every site in $S$ (see step~\ref{item:propagation} below), we call this line \emph{ancestral to $S$}. 
\item \label{item:splitting}
Each line in the graph is hit, independently and at rate $\varrho_\cB^{}$ for each $\cB \in \bP(S)$, by a \emph{$\cB$-splitting}, which is marked by a square with label `$\cB$'. Then, to the left of the splitting, if the line in question used to be ancestral to $A$, the continuing line is labelled as ancestral to $A \cap B_\bullet$, where $B_\bullet$ is the head of $\cB$. For each $B \in \cB^\circ$, we attach a fresh line starting from below, again growing at unit speed, which is called ancestral to $A \cap B$. In the case that $A \cap B = \varnothing$, we call the line \emph{nonancestral}.
\item \label{item:sampling}
We stop the construction in steps~\ref{item:singleline} and~\ref{item:splitting} after time $t$. We interpret each left endpoint or \emph{leaf} as an ancestor of the sampled individual, contributing the alleles at a certain subset of loci. Thus, we associate a type to each leaf, sampled according to the age of the associated line. Here, the \emph{age} of the associated line is simply the time that has passed since it was (re)started or equivalently, its (horizontal) length. Moreover, if a line is ancestral to $A$, this means that the corresponding ancestor only contributed the sites in $A$, whence it is enough to sample the marginal type with respect to $A$. To summarise, if a line has age $\theta$ and is ancestral to $A$, we draw the type of its leaf from $\Psi^{\bullet (0)}_\theta (\omega_0^{})_A^{}$. In particular, we ignore the nonancestral lines in this step.
\item \label{item:propagation}
Finally, given the types of the leaves, we reconstruct the type of the root, i.e. the present-day individual, as follows. Propagate the types of the leaves from left to right along the associated lines. When a splitting event is encountered, the type at the line to the right is formed by combining the types at the lines to the left in the obvious way: If the lines to the left are ancestral to $A_1,\ldots,A_r$ and carry types $x_{A_1}^{} \in X_{A_1},\ldots,x_{A_r}^{} \in X_{A_r}$, then the line to the right carries the type $x_{A_1 \cup \ldots \cup A_r}^{}$ where $x_{A_1 \cup \ldots \cup A_r,i}^{} = x_{A_j,i}^{}$ for the unique $j$ such that $i \in A_j$.
\end{enumerate}
In this setting, the interpretation of equation~\eqref{bulletduality} is that, after averaging over all realisations of this random graph, the distribution of the type of the root is precisely $\omega_t^\bullet$. 

\begin{figure}[t]
\begin{tikzpicture}[xscale=0.7,yscale=1.4]
\draw[line width = 0.35mm, red] (13,0) -- (10.5,0);
\draw[line width = 0.35mm, blue] (13,-0.1) -- (10.5,-0.1);
\draw[line width = 0.35mm, purple] (13,0.1) -- (10.5,0.1);
\draw[line width = 0.35mm, purple] (10,0.1) -- (0,0.1);

\draw[line width = 0.35mm, red] (10.25,-0.1) -- (10.25,-1) -- (7,-1);
\draw[line width = 0.35mm, blue] (10.35,-0.1) -- (10.35,-2) -- (0,-2);
\draw[line width = 0.35mm, black!10] (6.5,-1) -- (0,-1);

\draw[line width = 0.35mm, red] (6.75,-1.25) -- (6.75,-1.5) -- (0,-1.5);

\draw[line width  = 0.35mm, black!10] (3.25,-0.25) -- (3.25, -0.5) -- (0, -0.5);

\draw[line width = 0.35mm, black!10] (4.25,-2.25) -- (4.25,-2.5) -- (0,-2.5);

\draw[line width = 0.35mm, fill = white] (9.85,0.20) rectangle (10.65,-0.20) node[pos=.5] {$\cA$};
\draw[line width = 0.35mm, fill = white] (6.35,-0.8) rectangle (7.15,-1.2) node[pos=.5] {$\cB$};
\draw[line width = 0.35mm, fill = white] (2.6,0.2) rectangle (3.4,-0.2) node[pos = .5] {$\cC$};
\draw[line width = 0.35mm, fill = white] (3.85,-1.8) rectangle (4.65,-2.2) node[pos = .5] {$\cB$};


\node[anchor = east] at (0,0.1) {$\scriptstyle{x_1^{} \sim \Psi_{\{1\},t}^{\bullet(0)}(\omega_0^{})^{}}$};
\node[anchor = east] at (0,-1.5) {$\scriptstyle{x_2^{} \sim \Psi_{\{2\},t_3^{}}^{\bullet(0)}(\omega_0^{})^{}}$};
\node[anchor = east] at (0,-2) {$\scriptstyle{x_3^{} \sim \Psi_{\{3\},t_4^{}}^{\bullet(0)}(\omega_0^{})^{}}$};


\node[anchor = west] at (13,0) {$\scriptstyle{(x_1^{},x_2^{},x_3^{})}$};


\draw[dashed] (0,-2.5) -- (0,0.5);
\draw[dashed]  (3.25,0.25) -- (3.25,0.5);
\draw[dashed] (4.25,-1.75) -- (4.25,0.5);
\draw[dashed] (6.75,-0.75) -- (6.75,0.5);
\draw[dashed] (10.25,0.25) -- (10.25,0.5);
\draw[dashed] (13,0.1) -- (13,0.5);

\node[anchor = south] at (0,0.5) {$\scriptstyle{0}$};
\node[anchor = south] at (3.25,0.5) {$\scriptstyle{t_1^{}}$};
\node[anchor = south] at (4.25,0.5) {$\scriptstyle{t_2^{}}$};
\node[anchor = south] at (6.75,0.5) {$\scriptstyle{t_3^{}}$};
\node[anchor = south] at (10.25,0.5) {$\scriptstyle{t_4^{}}$};
\node[anchor = south] at (13,0.5) {$\scriptstyle{t}$};
\end{tikzpicture}
\caption{\label{fig:ancestry}
An illustration of the ancestry of a single individual (right) living at time $t$, in the case $n=3$. The ancestral lines of sites $i_\bullet^{} = 1$,$2$ and $3$ are drawn in purple, red and blue, respectively, while the nonancestral lineages are drawn in grey. The partitions are chosen as follows: $\cA = \{\{1\},\{2\},\{3\}\}$, $\cB = \{\{1,3\},\{2\}\}$, $\cC = \{\{1\},\{2,3\}\}$. Note that as the (red) ancestral line of site $2$ is hit by a $\cB$-splitting, its age is reset since $2 \in \{2\} \in \cB^\circ$. In contrast, when the (blue) ancestral line of site $3$ is hit by a $\cB$-splitting, its age is not reset because $3 \in \{1,3\} = B_\bullet^{}$.}
\end{figure}
 
Of course, all of this is most useful if we can actually compute the semigroup $\Psi^{(0)\bullet}$! Fortunately, this is not difficult, because $\psi_\bullet^{}$ is \emph{almost}  linear. In fact, if we drop the assumption of constant population size, that is, if we dropped the normalisation terms  $-sf(\nu) \nu$ and $ - u_{i_\bullet^{}}^{}$ in the definition of $\psi_{\textnormal{sel}}^{}$ and $\psi_{\textnormal{mut},\bullet}^{}$ above, then $\psi_{\textnormal{sel}}^{}$ would just be $sF$, which is linear. Similarly, the mutation operator $\psi_\bullet^{}$ would just be $u_{i_\bullet^{}}^{} M_{i_\bullet^{}}$, where $M_{i_\bullet^{}}$ is a linear map, which is defined by its action on the canonical basis, given by the point masses.
\begin{equation}\label{actiononbasis}
\delta_{x_1^{},\ldots,x_n^{}}^{} \mapsto m_{i_\bullet^{},0}^{} \delta_{x_1^{},\ldots,x_{i_\bullet^{} - 1}^{},0,x_{i_\bullet^{} + 1}^{}}^{} 
+ m_{i_\bullet^{},1}^{} \delta_{x_1^{},\ldots,x_{i_\bullet^{} - 1}^{},1,x_{i_\bullet^{} + 1}^{}}^{}
\end{equation}
Thus, when ignoring the assumption of a constant population size, the semigroup would be given by the matrix exponential
\begin{equation*}
\ee^{t(sF + u_{i_\bullet^{}}^{} M_{i_\bullet^{}})}.
\end{equation*}
Indeed, we can recover the solution for the case of constant population size by normalising after the fact; this strategy is commonly known as `Thompson's trick'; see e.g.~\cite{ThompsonMcBride74}.
\begin{lemma}\label{lem:psibulletflow}
The semigroup $\Psi^{(0)\bullet}$ is given by
\begin{equation*}
\Psi_t^{(0)\bullet} = \frac{\ee^{t(sF + u_{i_\bullet^{}}^{} M_{i_\bullet})} \nu^{}}{\| \ee^{t(sF + u_{i_\bullet^{}}^{} M_{i_\bullet})} \nu^{} \|},
\end{equation*}
where $\|\mu\| = \sum_{x \in X} \mu(x)$ denotes the total mass of the nonnegative measure / vector $\mu$. Note that  the entries of $F$ and $M_{i_\bullet^{}}$ are nonnegative (the entries of $F$ are either $0$ or $1$ and $M_{i_\bullet}^{}$ is a Markov matrix).
\end{lemma}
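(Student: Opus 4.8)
The plan is to verify \emph{Thompson's trick} directly. Write $L \defeq sF + u_{i_\bullet^{}}^{} M_{i_\bullet^{}}^{}$ for the linear operator appearing in the statement, solve the associated \emph{linear} differential equation $\dot v_t^{} = L v_t^{}$ explicitly as $v_t^{} = \ee^{tL} \nu$, and then show that the renormalisation $\omega_t^{} \defeq v_t^{} / \| v_t^{} \|$ solves the (nonlinear) equation $\dot \omega_t^{} = \psi_\bullet^{}(\omega_t^{})$ with $\omega_0^{} = \nu$. Since $\psi_\bullet^{}$ is Lipschitz continuous on $\cP(X)$ (it is the sum of the linear map $\psi_{\textnormal{mut},\bullet}^{}$ and the map $\psi_{\textnormal{sel}}^{}$, which is Lipschitz on the bounded set $\cP(X)$; cf.\ the discussion following Eq.~\eqref{psirecoeq}), the initial value problem has a unique solution, and the identification $\Psi_t^{(0)\bullet}(\nu) = \ee^{tL}\nu / \| \ee^{tL}\nu \|$ follows.

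The first step is to rewrite $\psi_\bullet^{}$ in the form
\begin{equation*}
\psi_\bullet^{}(\nu) = L\nu - \| L\nu \| \, \nu, \qquad \nu \in \cP(X).
\end{equation*}
From Eq.~\eqref{psiseldef} we have $\psi_{\textnormal{sel}}^{}(\nu) = s\big ( F\nu - f(\nu)\nu \big )$, and a direct computation gives $\| F\nu \| = \sum_{x \in X} (1 - x_{i_\bullet^{}}^{})\nu(x) = \nu_{i_\bullet^{}}^{}(0) = f(\nu)$. Similarly, collecting the ``$-\delta_x^{}$'' contributions in $\psi_{\textnormal{mut},\bullet}^{}$ shows $\psi_{\textnormal{mut},\bullet}^{}(\nu) = u_{i_\bullet^{}}^{}\big ( M_{i_\bullet^{}}^{}\nu - \nu \big )$ with $M_{i_\bullet^{}}^{}$ as in Eq.~\eqref{actiononbasis}, and since $M_{i_\bullet^{}}^{}$ is a Markov matrix, $\| M_{i_\bullet^{}}^{}\nu \| = \| \nu \| = 1$. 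As $\| \cdot \|$ is linear, this yields $\| L\nu \| = s f(\nu) + u_{i_\bullet^{}}^{}$, which is exactly the scalar multiplying $\nu$ in the two normalising terms, and the displayed identity follows.

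Next I would set $v_t^{} \defeq \ee^{tL}\nu$ and $n_t^{} \defeq \| v_t^{} \|$. Because the entries of $F$ and $M_{i_\bullet^{}}^{}$ are nonnegative, so are those of $\ee^{tL}$; hence $v_t^{}$ is a nonnegative vector and, expanding $\ee^{tL} = \id + tL + \ldots$ and pairing with the all-ones vector, $n_t^{} \geqslant \| \nu \| = 1 > 0$. Thus $\omega_t^{} \defeq v_t^{}/n_t^{}$ is well defined, lies in $\cP(X)$, and satisfies $\omega_0^{} = \nu$. Differentiating and using the linearity and homogeneity of $\| \cdot \|$, we get $\dot n_t^{} = \| \dot v_t^{} \| = \| L v_t^{} \| = n_t^{}\| L\omega_t^{} \|$, and therefore
\begin{equation*}
\dot \omega_t^{} = \frac{\dot v_t^{}}{n_t^{}} - \frac{v_t^{}\, \dot n_t^{}}{n_t^2} = L\omega_t^{} - \| L\omega_t^{} \| \, \omega_t^{} = \psi_\bullet^{}(\omega_t^{}),
\end{equation*}
by the identity from the previous step. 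By uniqueness, $\Psi_t^{(0)\bullet}(\nu) = \omega_t^{}$, which is the claimed formula.

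The argument is essentially bookkeeping, and I do not anticipate a real obstacle; the two places that call for some care are (i) matching the scalar $\| L\nu \|$ with the normalising coefficient already built into $\psi_\bullet^{}$ --- that is, checking $\| F\nu \| = f(\nu)$ and that $M_{i_\bullet^{}}^{}$ is stochastic --- and (ii) verifying that the denominator $\| \ee^{tL}\nu \|$ is strictly positive for all $t \geqslant 0$, which is exactly where the nonnegativity of the entries of $L$ (noted parenthetically in the statement) enters.
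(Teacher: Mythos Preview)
Your proposal is correct and follows exactly the route the paper indicates (``a straightforward computation'' via Thompson's trick, with details deferred to references): differentiate the normalised exponential and check that it satisfies $\dot\omega_t^{} = \psi_\bullet^{}(\omega_t^{})$. Your preliminary identity $\psi_\bullet^{}(\nu) = L\nu - \|L\nu\|\,\nu$, obtained by verifying $\|F\nu\| = f(\nu)$ and $\|M_{i_\bullet^{}}^{}\nu\| = 1$, makes the verification particularly clean, and your justification that $\|\ee^{tL}\nu\| \geqslant \|\nu\| = 1$ via the nonnegativity of $L$ is precisely the point highlighted in the statement.
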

\begin{proof}
This is a straightforward computation. For details, the reader is referred to~\cite{BaakeCorderoHummel21} for the 2-type case, and to~\cite[Section 5]{bb} for a more general discussion.
\end{proof}

\begin{figure}[t]
\begin{tikzpicture}[xscale=0.7,yscale=1.4]
\draw[line width = 0.35mm, red] (13,0) -- (10.5,0);
\draw[line width = 0.35mm, blue] (13,-0.1) -- (10.5,-0.1);
\draw[line width = 0.35mm, purple] (13,0.1) -- (10.5,0.1);
\draw[line width = 0.35mm, purple] (10,0.1) -- (0,0.1);

\draw[line width = 0.35mm, red] (10.25,-0.1) -- (10.25,-1) -- (7,-1);
\draw[line width = 0.35mm, blue] (10.35,-0.1) -- (10.35,-2) -- (0,-2);
\draw[line width = 0.35mm, black!10] (6.5,-1) -- (0,-1);

\draw[line width = 0.35mm, red] (6.75,-1.25) -- (6.75,-1.5) -- (0,-1.5);

\draw[line width  = 0.35mm, black!10] (3.25,-0.25) -- (3.25, -0.5) -- (0, -0.5);

\draw[line width = 0.35mm, black!10] (4.25,-2.25) -- (4.25,-2.5) -- (0,-2.5);

\draw[line width = 0.35mm, fill = white] (9.85,0.20) rectangle (10.65,-0.20) node[pos=.5] {$\cA$};
\draw[line width = 0.35mm, fill = white] (6.35,-0.8) rectangle (7.15,-1.20) node[pos=.5] {$\cB$};
\draw[line width = 0.35mm, fill = white] (2.85,0.2) rectangle (3.65,-0.2) node[pos = .5] {$\cC$};
\draw[line width = 0.35mm, fill = white] (3.85,-1.8) rectangle (4.65,-2.2) node[pos = .5] {$\cB$};


\node[anchor = east] at (0,0.1) {$\scriptstyle{x_1^{} \sim \Psi_{\{1\},t}^{\bullet(0)}(\omega_0^{})^{}}$};
\node[anchor = east] at (0,-1.5) {$\scriptstyle{x_2^{} \sim \Psi_{\{2\},t_3^{}}^{\bullet(0)}(\omega_0^{})^{}}$};
\node[anchor = east] at (0,-2) {$\scriptstyle{x_3^{} \sim \Psi_{\{3\},t_4^{}}^{\bullet(0)}(\omega_0^{})^{}}$};


\node[anchor = west] at (13,0) {$\scriptstyle{(x_1^{},0,1)}$};

\draw (5,-1.5) pic[red, line width = 0.35mm] {cross = 4pt};
\draw[line width = 0.35mm, red] (8,-1) ellipse (4pt and 2pt);
\draw (3,-2) pic[blue, line width = 0.35mm] {cross = 4pt};
\draw(11,-0.1) pic[blue,line width = 0.35mm] {cross = 4pt};

\node[anchor = north west] at (2.5,-2) {$\scriptstyle{x_3^{} \to 1}$};
\node[anchor = north west] at (5,-1.5) {$\scriptstyle{x_2^{} \to 1}$};
\node[anchor = north west] at (8,-1) {$\scriptstyle{1 \to 0}$};
\node[anchor = north west] at (11,-0.1) {$\scriptstyle{1 \to 1}$};


\draw[dashed] (0,-2.5) -- (0,0.5);
\draw[dashed]  (3.25,0.25) -- (3.25,0.5);
\draw[dashed] (4.25,-1.75) -- (4.25,0.5);
\draw[dashed] (6.75,-0.75) -- (6.75,0.5);
\draw[dashed] (10.25,0.25) -- (10.25,0.5);
\draw[dashed] (13,0.1) -- (13,0.5);

\node[anchor = south] at (0,0.5) {$\scriptstyle{0}$};
\node[anchor = south] at (3.25,0.5) {$\scriptstyle{t_1^{}}$};
\node[anchor = south] at (4.25,0.5) {$\scriptstyle{t_2^{}}$};
\node[anchor = south] at (6.75,0.5) {$\scriptstyle{t_3^{}}$};
\node[anchor = south] at (10.25,0.5) {$\scriptstyle{t_4^{}}$};
\node[anchor = south] at (13,0.5) {$\scriptstyle{t}$};
\end{tikzpicture}
\caption{\label{fig:ancestrywithmutations}
The ancestry of a single individual (right), with added mutations on the ancestral lines of sites $2$ and $3$. As before, the ancestral line of site $1 = i_\bullet^{},2$ and $3$ are drawn in purple, red and blue, respectively, while the nonancestral lineages are drawn in grey. The elements of the Poisson sets $A_i$ and $B_i$ are drawn as circles and crosses. Whenever a (blue, red) cross is enountered, the allele (at site $2,3$) changes to $1$,  whenever a (blue, red) cross is enountered, the allele (at site $2,3$) changes to $0$. Note that there are no mutation events on the purple line, because mutations at site $1 = i_\bullet^{}$ are already accounted for by the flow $\Psi^{\bullet(0)}$.
}
\end{figure}
In the case of single-crossover, Theorem~\ref{thm:recursion} then allows us to recursively compute the solution.
\begin{coro}\label{coro:smrsolution}
Let $(i_k^{})_{0 \leqslant k \leqslant n-1}^{}$, $\varrho^{(k)}$, $C^{(k)}$ and $D^{(k)}$  be as in Section~\textnormal{\ref{sec:singlecrossover}}  and let $\omega^{\bullet(k)}$, for each $1 \leqslant k \leqslant n-1$, be the solution of the initial value problem
\begin{equation*}
\dot \omega^{\bullet(k)}_t = \psi_\bullet^{} (\omega^{\bullet(k)}_t) + \sum_{\ell = 1}^{k} \varrho^{(\ell)} \big ( \omega^{\bullet(k)}_{C^{(\ell)},t} 
\otimes \omega^{\bullet(k)}_{D^{(\ell)},t} - \omega_t^{\bullet(k)} \big ), \,  \omega^{\bullet(k)}_0 = \omega_0^{}.
\end{equation*}
In addition, let $\omega^{\bullet (0)}_t \defeq \Psi^\bullet_t (\omega_0^{})$ \textnormal{(}see Lemma~\textnormal{\ref{lem:psibulletflow}}\textnormal{)}.
Then, the family $\big (\omega^{\bullet(k)}\big)_{1 \leqslant k \leqslant n-1}$ satisfies the recursion 
\begin{equation*}
\omega_t^{\bullet(k)} = \ee^{-\varrho^{(k)} t} \omega_t^{\bullet(k-1)} + \omega_{C^{(k)},t}^{\bullet(k-1)} \otimes \int_0^t \varrho^{(k)} \ee^{-\varrho^{(k)} \tau} \omega_{D^{(k)},\tau}^{\bullet(k-1)} \dd \tau.
\end{equation*}
\end{coro}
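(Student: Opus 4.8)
The plan is to obtain the corollary as a direct application of Theorem~\ref{thm:recursion}, with $\psi_\bullet^{}$ playing the role of $\psi$. The sole hypothesis of that theorem is that the vector field in question satisfies Assumption~\ref{ass:psiassumption}; for $\psi_\bullet^{} = \psi_{\textnormal{sel}}^{} + \psi_{\textnormal{mut},\bullet}^{}$ this is precisely Lemma~\ref{lem:psibulletisnice}. Since the truncated initial value problems appearing in the statement of the corollary are nothing but Eq.~\eqref{truncatedeqs} with $\psi$ replaced by $\psi_\bullet^{}$ --- using the same nondecreasing (in the sense of $\preccurlyeq$) sequence $(i^{(k)})_{0 \leqslant k \leqslant n-1}^{}$, the same rates $\varrho^{(k)}$, and the same heads and tails $C^{(k)}$, $D^{(k)}$ fixed in Section~\ref{sec:singlecrossover} --- Theorem~\ref{thm:recursion} applies verbatim and yields exactly the asserted recursion for $1 \leqslant k \leqslant n-1$.

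It then remains only to identify the base case of the recursion. By construction $\omega^{\bullet(0)}$ is the solution of $\dot\omega^{\bullet(0)}_t = \psi_\bullet^{}(\omega^{\bullet(0)}_t)$ with $\omega^{\bullet(0)}_0 = \omega_0^{}$, that is, $\omega^{\bullet(0)}_t = \Psi^\bullet_t(\omega_0^{})$, which is precisely the choice made in the statement; its closed form is provided by Lemma~\ref{lem:psibulletflow} via Thompson's trick. This completes the argument.

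For a fully self-contained exposition one could instead re-run the reduction used in the proof of Theorem~\ref{thm:recursion}: fix $k$, set $\widetilde\psi \defeq \psi_\bullet^{} + \sum_{\ell=1}^{k-1}\varrho^{(\ell)}\big(\cR_{\{C^{(\ell)},D^{(\ell)}\}}-\id\big)$, verify via the same case analysis on the position of $i^{(\ell)}$ relative to $i^{(k)}$ (together with Lemma~\ref{lem:psibulletisnice} for $\psi_\bullet^{}$) that $\widetilde\psi$ satisfies Assumption~\ref{ass:psiassumption} on the two-site system $(X_{C^{(k)}},X_{D^{(k)}})$, and then invoke the two-site case of Theorem~\ref{thm:duality} with the deterministic-flow dual of Example~\ref{ex:semigroup}. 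However, this merely unpacks Theorem~\ref{thm:recursion}, so the one-line reduction above is enough.

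The only point requiring care --- hardly an obstacle --- is the bookkeeping check that nothing in the proof of Theorem~\ref{thm:recursion} uses any property of $\psi$ beyond Lipschitz continuity (for well-posedness) and Assumption~\ref{ass:psiassumption} (used there to confirm the hypothesis for the auxiliary $\widetilde\psi$). Both hold for $\psi_\bullet^{}$: it is a sum of the Lipschitz maps $\psi_{\textnormal{sel}}^{}$ and the (linear, hence Lipschitz) $\psi_{\textnormal{mut},\bullet}^{}$, and it satisfies Assumption~\ref{ass:psiassumption} by Lemma~\ref{lem:psibulletisnice}. Hence the substitution $\psi \rightsquigarrow \psi_\bullet^{}$ is legitimate and the recursion follows.
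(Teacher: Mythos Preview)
Your proposal is correct and matches the paper's approach exactly: the corollary is stated there without a separate proof, as an immediate consequence of Theorem~\ref{thm:recursion} applied with $\psi_\bullet^{}$ in place of $\psi$, the hypothesis being supplied by Lemma~\ref{lem:psibulletisnice}. Your additional remarks on Lipschitz continuity and the identification of the base case via Lemma~\ref{lem:psibulletflow} are appropriate elaborations of what the paper leaves implicit.
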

We still have to deal with mutation at the remaining sites. In analogy to Eq.~\eqref{actiononbasis}, we define for each $i \in S^\circ$ a linear map $M_i$ via
\begin{equation*}
\delta_{x_1^{},\ldots,x_n^{}}^{} \mapsto m_{i,0}^{} \delta_{x_1^{},\ldots,x_{i_\bullet^{} - 1}^{},0,x_{i_\bullet^{} + 1}^{}}^{} 
+ m_{i,1}^{} \delta_{x_1^{},\ldots,x_{i_\bullet^{} - 1}^{},1,x_{i_\bullet^{} + 1}^{}}^{}
\end{equation*}

This is not difficult because $\psi_\bullet^{}$ and $M_i$ for $i \in S^\circ$ act on different sites, and therefore commute.

\begin{theorem}\label{thm:finalsolution}
The solution $\omega_t^{}$ of Eq.~\eqref{psirecoeq} with $\psi = \psi_{\textnormal{sel}}^{} + \psi_{\textnormal{mut}}^{}$ and initial condition 
$\omega_0^{}$ is given by 
\begin{equation*}
\omega_t^{} = \exp \Big (t \sum_{i \in S^\circ}  u_i^{} (M_i  - \id)   \Big ) \omega^\bullet_t,
\end{equation*}
where $\omega_t^\bullet = \omega^{\bullet (n-1)}_t$ is the solution of the initial value problem 
\begin{equation*}
\dot \omega_t^{\bullet} = \psi_\bullet ( \omega_t^{\bullet}) + \sum_{\ell = 1}^{n-1} \big ( \omega_{C^{(k)},t}^{\bullet} \otimes \omega_{D^{(k)},t}^{\bullet} - \omega_t^{\bullet}   \big ), \, \, \omega_0^\bullet = \omega_0^{}.
\end{equation*}
\hfill 
\end{theorem}
\begin{proof}
First, note that $M_i$ and $M_j$ commute for $i \neq j$. Additionally, for $i \in S^\circ$ and $\nu \in \cP(X)$,
\begin{equation*}
F M_i \nu = \sum_{\substack{x \in X \\ x_{i_\bullet^{}}^{} = 0}} \nu(x) \big ( M_{i,0} \delta_{(x_1^{},\ldots,x_{i-1}^{},0,x_{i+1}^{},\ldots,x_n^{})} +  
M_{i,1} \delta_{(x_1^{},\ldots,x_{i-1}^{},1,x_{i+1}^{},\ldots,x_n^{})}  \big ) = M_i F \nu,
\end{equation*}
so $F M_i = M_i F$ for all $i \in S^\circ$. Thus, abbreviating $\cE_t \defeq \exp \Big (t \sum_{i \in S^\circ}  u_i^{} (M_i  - \id)   \Big )$, we have
$\psi_\bullet^{} \circ \cE_t = \cE_t \circ \psi_\bullet$. Moreover, for all $\nu \in \cP(X)$ and $\{A,B\} \in \bP(S)$, we have
$\cE_t (\nu_A^{} \otimes \nu_B^{} ) = \cE_t(\nu)_A \otimes \cE_t(\nu)_B$. Thus, 
\begin{equation*}
\begin{split}
\frac{\dd}{\dd t} \cE_t \omega^\bullet_t 
&= \sum_{i \in S^\circ} u_i (M_i - \id) \cE_t \omega_t^\bullet + \cE_t \Big ( \psi_\bullet^{} (\omega_t^\bullet)
+ \sum_{i \in S^\circ} \varrho_i^{} \big ( \omega_{C_i,t}^\bullet \otimes \omega_{D_i,t}^\bullet -\omega_t^\bullet   \big )    \Big ) \\
&=  \sum_{i \in S^\circ} u_i (M_i - \id) \cE_t \omega_t^\bullet  + \psi_\bullet  ( \cE_t \omega_t^\bullet  ) + \sum_{i \in S^\circ} \varrho_i^{}
\big ( (\cE_t \omega_t^\bullet)_{C_i} \otimes (\cE_t \omega_t^\bullet)_{D_i} - \cE_t \omega_t^\bullet    \big ) \\
&= \psi (\cE_t \omega_t^\bullet) +  \sum_{i \in S^\circ} \varrho_i^{}
\big ( (\cE_t \omega_t^\bullet)_{C_i} \otimes (\cE_t \omega_t^\bullet)_{D_i} - \cE_t \omega_t^\bullet    \big ).
\end{split}
\end{equation*}
\end{proof}

This last result admits the following stochastic interpretation. Note that for any $i$, \mbox{$u_i^{}(M_i - \id)$} is the transpose of the generator matrix of the \emph{mutation process at site $i$}, that is, the random walk on $X$ that jumps from any $x \in X$ at rate $u_i^{} m_{i,0}$ ($u_i^{} m_{i,1}$) to
$(x_1^{},\ldots,x_{i-1}^{},0,x_{i+1}^{},\ldots,x_n^{})$ ($(x_1^{},\ldots,x_{i-1}^{},1,x_{i+1}^{},\ldots,x_n^{})$). 
Thus, we can interpret $\omega_t^{}$ in Theorem~\ref{thm:finalsolution} as the distribution at time $t$ of the mutation processes, acting independently at every site in $S^\circ$, with initial distribution $\omega_t^\bullet$ (which would be the type distribution when neglecting mutations at the sites in $S^\circ$). 

Graphically, this can be realised by decorating the ancestral lines (as illustrated in
 Fig.~\ref{fig:ancestry}) with independent Poisson point processes $\Pi_{0,i}$ and $\Pi_{1,i}$, with intensities $u_i^{} m_{i,0} \dd t$ and $u_i^{} m_{i,1} \dd t$. When propagating the types
 through the graph, the allele of an individual at site $i$ is changed to $0$ when it encounters an element of $\Pi_{0,i}$ and is changed to $1$ if it
 encounters an element of the Poisson set $\Pi_{1,i}$. This is illustrated in Fig.~\ref{fig:ancestrywithmutations}.

\begin{remark}\label{rem:moregeneral}
In this section, we have restricted ourselves to the case $X = \{0,1\}^n$ of binary sequences. Inspecting the arguments shows that replacing the sets of alleles at all sites except the selected site by arbitrary $X_i$ poses no real challenge and mostly increases notational complexity (for infinite $X_i$ one would, of course, need to impose an appropriate integrability condition on the mutation kernel). Even more generally, $\psi_{\textnormal{mut},\circ}^{}$ could be replaced by an arbitrary linear term (not necessarily with constant coefficients) which commutes with 
$\psi_{\textnormal{sel}}^{} + \psi_{\textnormal{mut},\bullet}^{}$.

 It is less trivial to allow more general alphabets at the selected site, because there are, in general, no closed formulas for the solution of the selection equation for more than two types. Although whenever such a solution can be found, Theorems 5.3 and 5.4 would still allow to add an arbitrary number of neutral sites with arbitrary sets of alleles. In particular, the stochastic representation discussed thereafter holds also in the multitype situation.
\end{remark}

\appendix
\section{Proof of Lemma~\ref{lem:existence}}
\label{app:proof}
\begin{proof}
By our assumption, $E$ is a Polish space whose topology is induced by some metric $\tilde d$; without loss of generality, we may assume that $\tilde d (y,z) < 1$ for all $y,z \in E$. We can equip $F$ with the metric $d$ given by
\begin{equation*}
d \big ( (\cA,v), (\cB,w)  \big ) \defeq
\begin{cases}
1, & \textnormal{if } \cA \neq \cB, \\
\frac{1}{n} \sum_{A \in \cA} \tilde d(v_A^{},w_A^{}) & \textnormal{if } \cA = \cB.
\end{cases}
\end{equation*}
It is clear that $d$ induces on each subset of the form $\{ \cA \} \times E^\cA$ the product topology of $E$. Hence, all of these subspaces are Polish, and so is their finite union, $F$.

We now construct the gLPP starting from $(\cA,v)$. Let $(\widetilde \tau_i^{})^{}_{i \in \NN}$ be a sequence of i.i.d. exponentially distributed random variables with mean $1 / \varrho$, where $\varrho \defeq \sum_{\cA \in \bP(S)} \varrho_\cA^{}$. Additionally, we define independent copies $Y^{(k)}_A =  (Y^{(k)}_{A,t})_{t \geqslant 0}^{}$ of $Y$ for each $A \subseteq S$ and $k \in \NN \defeq \{1,2,3,\ldots\}$. When $A \in \cA$ and $k=1$, $Y_A^{(k)}$ is started from $v_A^{}$, otherwise from $\emptyset$.

We will now inductively define $\tau_k^{}$, $\Sigma|_{[0,\tau_k^{})}$ and $V^{(k)} = (V^{(k)}_t)^{}_{t \geqslant 0}$  for each $k \in \NN$. Here, $\tau_k^{}$ is the time of the $k$-th splitting/resetting event and $V^{(k)}$ is the process of labels if splitting/resetting events are suspended after the $k$-th one.

We start by letting $V^{(1)} \defeq (Y^{(1)}_{A,t})^{}_{A \in \cA, t \geqslant 0}$,  $\tau_1^{} \defeq \widetilde \tau_1^{} / |\cA|$. 
Then, we set $\Sigma_t \defeq (\cA,V^{(1)}_t)$ for all $t \in [0,\tau_1^{})$.

Given  $\tau_k^{}$, $\Sigma|_{[0,\tau_k^{})}$ and $V^{(k)}$ for some $k \geqslant 1$, we choose a random partition $\cB$ with probability $\varrho_\cB^{} / \varrho$. Also, choose a block $A$ uniformly from $\Sigma_{\tau_k^{}-}^{}$. We let
\begin{equation*}
\tau_{k+1}^{} \defeq \tau_k^{} + \frac{\widetilde \tau_{k+1}^{}}{|\Sigma_{\tau_k^{}-}|}.
\end{equation*}
For $t \in [\tau_k^{},\tau_{k+1}^{})$, we let 
\begin{equation*}
\Sigma_t^{(0)} \defeq \big ( \Sigma_{\tau_k^{}-} \setminus \{A\} \big ) \cup \cB|_A.
\end{equation*}
Then, for all $t \in [\tau_k^{}, \infty)$, we set
\begin{equation*}
V^{(k+1)}_{A',t} \defeq 
\begin{cases}
Y^{(k+1)}_{A',t - \tau_k^{}} & \textnormal{for } A' \in \big ( \Sigma_{\tau_k^{}-} \setminus \{A\} \big ) \cup \cB^\circ|_{A}, \\
V^{(k)}_{A',t} & \textnormal{for } A' = A \cap B_\bullet^{}.
\end{cases}
\end{equation*}

Finally, for $t \in [\tau_k^{},\tau_{k+1}^{})$, we define 
$
V_t^{} \defeq V_t^{(k+1)}.
$
Thus, $\Sigma_t$ is now defined for all $t \in [\tau_k^{}, \tau_{k+1}^{})$. In particular, $\Sigma_{\tau_k^{}} = \Sigma_{\tau_k^{} -} \boxwedge_A \cB$.

Note that $\tau_{k+1}^{} - \tau_k^{}$ is, for all $k$, bounded from below by $\widetilde \tau_{k+1}^{} / n$. Because $\sum_{j = 1}^\infty \widetilde \tau_j^{} = \infty$ almost surely, this inductive procedure almost surely defines $\Sigma$ on all of $[0,\infty)$. 
The Markov property of $\Sigma$ is due to the memorylessness of the exponential distribution. 

Next, we verify the Feller property. Fix $t \geqslant 0$. To show that the map 
\begin{equation*}
(\cA,v) \mapsto \EE \big [ f(\Sigma_t) \mid \Sigma_0 = (\cA,v)  \big ]
\end{equation*}
is in $\cC_0(F)$ (the set of real-valued continuous functions on $F$ that vanish at infinity), it suffices to prove that the map 
\begin{equation*}
v  \mapsto \EE \big  [ f(\Sigma_t) \mid \Sigma_0 = (\cA,v)  \big ]
\end{equation*}
is in $\cC_0(E^\cA)$ for any fixed $\cA \in \bP(S)$. We fix $t \in \RR_{\geqslant 0 }$ and decompose
\begin{equation*}
\Sigma^{(0)}_t = \Sigma^{(o)}_t \cup \Sigma_t^{(r)},
\end{equation*}
where $\Sigma^{(o)}_t$ contains the blocks who still carry the ``original'' labels  $Y^{(1)}$.

In addition, given any $\cB \in \bP(S)$, $v \in E^\cB$ and $\cC \subseteq \cB$, we abbreviate $ (v_C^{})_{C \in \cC}^{}$ as $v_\cC^{}$. Similarly, we define $Q^\cC$ as the semigroup of an independent collection of copies of $Y$, associated to each block of $\cC$, acting on $\cC_0(E^\cC)$. Clearly, this semigroup inherits the Feller property from $Q$, as can be seen from Fubini's theorem.
Decompose
\begin{equation} \label{dsagfac}
\begin{split}
\EE \big [ f(\Sigma_t) &\mid \Sigma_0 = (\cA,v)  \big ] \\
&=
\sum_{\substack{(\cC,\cD) \in \bP(S)^2 \\ \cC \cap \cD = \varnothing, \cC \cup \cD = \Sigma_t} } \EE \big [ f(\Sigma_t) \mid \Sigma_0 = (\cA,v), \Sigma^{(o)}_t = \cC, \Sigma^{(r)}_t = \cD  \big ]
\PP \big ( \Sigma^{(o)}_t = \cC, \Sigma^{(r)}_t = \cD \big ).
\end{split}
\end{equation} 
Now, notice that  the conditional expectation
\begin{equation*}
\EE \big [ f(\Sigma_t) \mid \Sigma_0 = (\cA,v), \Sigma^{(o)}_t = \cC, \Sigma^{(r)}_t = \cD  \big ]
\end{equation*}
does not depend on $v_\cD^{}$. Moreover, by dominated convergence, it is continuous in $v_{\cC}^{}$. Thus,
\begin{equation*}
\EE \big [ f(\Sigma_t) \mid \Sigma_0 = (\cA,v), \Sigma^{(o)}_t = \cC, \Sigma^{(r)}_t = \cD  \big ]
= Q_t^{\cC} \EE \big [ f(\Sigma_t) \mid \Sigma_0 = (\cA,v_{\cD}^{},\cdot), \Sigma^{(o)}_t = \cC, \Sigma^{(r)}_t = \cD  \big ] (v_{\cC}^{}),
\end{equation*}
Since we assumed that $Y$ is a Feller process and by Fubinis's theorem, the semigroup $Q_t^{\cC}$ is Feller. Hence, the map 
\begin{equation*}
v \mapsto Q_t^{\cC} \EE \big [ f(\Sigma_t) \mid \Sigma_0 = (\cA,v^{(r)},\cdot), \Sigma^{(o)}_t = \cC, \Sigma^{(r)}_t = \cD  \big ] (v_\cC^{})
\end{equation*}
is in $\cC_0(E^\cA)$, and, by Eq.~\eqref{dsagfac}, so is the map
\begin{equation*}
v \mapsto \EE \big [ f(\Sigma_t) \mid \Sigma_0 = (\cA,v)  \big ].
\end{equation*}

Next, we need to prove the strong continuity of the map $t \mapsto P_t f$ for any fixed $f \in \cC_0(F)$. Due to the semigroup property, it suffices to show that
$\lim_{t \to 0} \| P_t f - f \|_\infty = 0$. Let $(\cA,v) \in F$ be fixed. Recall that $\tau_1^{}$ is the time of the first resetting/splitting event. By the triangle inequality,
\begin{equation*}
\begin{split}
|P_t f (\cA,v) - f(\cA,v)| &= | \EE \big [ f(\Sigma_t) \mid \Sigma_0 = (\cA,v)  \big ] - f(\cA,v) | \\
&=
|\EE \big [ f(\Sigma_t) \mid \Sigma_0 = (\cA,v), \tau_1^{} > t    \big ] - f (\cA,v) | + \cO(t) \\
&=
|Q_t^\cA f(\cA,v) - f(\cA,v)| + \cO(t) \\
& \leqslant
\max_{\cA \in \bP(S)} \|Q_t^\cA f - f \|_\infty + \cO(t),
\end{split}
\end{equation*}
where the last expression does not depend on $(\cA,v)$ and converges to $0$ as $t \to 0$, by the strong continuity of $Q^\cA$ for each $\cA \in \bP(S)$. In the first step, we used that 
$\PP(\tau_1^{} \leqslant t) = 1 - \ee^{-\varrho |\cA| t} \leqslant 1 - \ee^{-n \varrho t} = \cO(t)$.

It remains to show that $\cL$ is indeed the generator of $\Sigma$ (or rather, its associated semigroup). We start by decomposing according to the time of the first recombination event,
\begin{equation*}
\begin{split}
\EE \big [ & f(\Sigma_h) - f(\cA,v) \mid \Sigma_0 = (\cA,v)   \big ]  \\
&= \EE \big [ f(\Sigma_h) - f(\cA,v) \mid \Sigma_0 = (\cA,v),  \tau_1^{} \leqslant h    \big ] \PP(\tau_1^{} \leqslant h) \\
&\quad +  \EE \big [ f(\Sigma_h) - f(\cA,v) \mid \Sigma_0 = (\cA,v),  \tau_1^{} > h    \big ] \PP(\tau_1^{} > h).
\end{split}
\end{equation*}
We start by investigating the first term. Since $\tau_1^{}$ is exponentially distributed, we have
\mbox{$\PP(\tau_1^{} \leqslant h) = 1 - \ee^{-\varrho |\cA| h} = \varrho |\cA| h + \cO(h^2)$}. Thus, it is enough to consider the preceding expectation up to $o(1)$.
Conditional on $\tau_1^{} \leqslant h$, the probability that also $\tau_2^{} \leqslant h$ is $\cO(h)$. Thus, 
\begin{equation*}
\begin{split}
\EE \big [ & f(\Sigma_h) - f(\cA,v) \mid \Sigma_0 = (\cA,v),  \tau_1^{} \leqslant h    \big ] \PP(\tau_1^{} \leqslant h)  \\
& = \EE \big [  \big ( f(\Sigma_h) - f(\cA,v)  \big ) \one_{\tau_1^{} \leqslant h}^{} \mid \Sigma_0 = (\cA,v),  \tau_1^{} \leqslant h, \tau_2^{} > h \big ] + o(h) \\
& = \int_0^h \varrho |\cA| \ee^{-\varrho |\cA| \eta} \sum_{A \in \cA} \frac{1}{|\cA|} \sum_{\cB \in \bP(S)} \frac{\varrho_\cB^{}}{\varrho} \Big (Q_{h - \eta}^{\cB^\circ|_A} Q_h^{B_\bullet^{}} Q_h^{\cA|_{A^c}} 
f \big ((\cA,v) \boxwedge_A \cB \big ) - f(\cA,v) \Big ) \dd \eta + o(h) \\
&= \int_0^h \varrho |\cA| \ee^{-\varrho |\cA| \eta} \dd \eta \sum_{A \in \cA} \frac{1}{|\cA|} \sum_{\cB \in \bP(S)} \frac{\varrho_\cB^{}}{\varrho}
\Big ( f \big ((\cA,v) \boxwedge_A \cB \big ) - f(\cA,v) \Big ) + o(h) \\
&= h \sum_{A \in \cA} \sum_{\cB \in \bP(S)} \varrho_\cB^{} \Big ( f \big ( (\cA,v) \boxwedge_A \cB  \big ) - f(\cA,v)        \Big ) + o(h),
\end{split}
\end{equation*} 
where we made use of the fact that $f \in \widetilde{D} (\cL)$ in the third step. Next, we deal with the second term.  
\begin{equation*}
\begin{split}
\EE \big [ & f(\Sigma_h) - f(\cA,v) \mid \Sigma_0 = (\cA,v),  \tau_1^{} > h    \big ] \PP(\tau_1^{} > h) \\
&= \big ( Q^\cA_h f(\cA,v) - f(\cA,v) \big ) \ee^{-\varrho |\cA| h}  \\
&= \Big ( h \sum_{j = 1}^{|\cA|} \cQ_j f (\cA,\cdot) (v)  + o(h) \Big ) \big (1 - \varrho |\cA| h + \cO(h^2) \big ) \\
&=  h \sum_{j = 1}^{|\cA|} \cQ_j f (\cA,\cdot) (v)  + o(h).
\end{split}
\end{equation*}
Putting this together proves the claim. Finally, note that, for Feller processes, the semigroup is uniquely determined by the semigroup; see~\cite[Prop.~2.10]{EundK}

\end{proof}

\section*{Acknowledgements}
It is a pleasure to thank Ellen and Michael Baake for a careful reading of and helpful suggestions to improve this manuscript. This project was funded by the Deutsche Forschungsgemeinschaft (DFG, German Research Foundation) – Project-ID 317210226 – SFB 1283.

\bigskip

\medskip

\bigskip

\bigskip


\begin{thebibliography}{99}

\bibitem{SelrekDocumenta}
F.~Alberti and E.~Baake, Solving the selection-recombination equation: Ancestral lines and dual processes, \textit{Doc.~Math.} \textbf{26} (2021), 743--793.

\bibitem{SelrekTPB}
F.~Alberti, C.~Herrmann and E.~Baake, Selection, recombination, and the ancestral initiation graph, \textit{Theor.~Popul.~Biol.} \textbf{142} (2021), 46--56.


\bibitem{bb}
E. Baake and M. Baake, An exactly solved 
model for mutation, recombination and selection,
\textit{Can. J. Math} {\bf 55} (2003), 3--41  and
Erratum {\bf 60} (2008), 264--265.



\bibitem{haldane}
E. Baake and M. Baake, Haldane linearisation done right: 
solving the nonlinear recombination equation the easy way,
\textit{Discr. Cont. Dyn. Syst. A} \textbf{36} (2016), 6645--6656.




\bibitem{recoreview}
E. Baake and M. Baake, Ancestral lines under recombination,  in: \textit{Probabilistic Structures in Evolution}, E.~Baake and A.~Wakolbinger (eds.), EMS Press, Berlin (2021), pp.~365--382.



\bibitem{BaakeBaakeSalamat} E. Baake, M. Baake and M. Salamat, The general recombination equation in continuous time and its solution, \textit{Discr. Cont. Dyn. Syst. A} \textbf{36} (2016),  63--95 and Erratum and addendum {\bf 36} (2016), 2365--2366; \texttt{arXiv:1409.1378}.

\bibitem{BaakeCorderoHummel18}
E.~Baake, F.~Cordero and S.~Hummel, A probabilistic view on the deterministic mutation-selection equation: Dynamics, equilibria, and ancestry via individual lines of descent, \textit{J.~Math.~Biol.} \textbf{77} (2018), 795--820.

\bibitem{BaakeCorderoHummel21}
E.~Baake, F.~Cordero and S.~Hummel, Lines of descent in the deterministic mutation-selection model with pairwise interaction, \textit{Ann.~Appl.~Probab.}, in press.

  








\bibitem{BhaskarSong} A. Bhaskar and Y.~S. Song, Closed-form asymptotic sampling distributions under the coalescent with
recombination for an arbitrary number of loci, \textit{Adv. Appl. Probab.} \textbf{44} (2012), 391--407.



\bibitem{BurgerBuch}
R.~B\"urger, The Mathematical Theory of Selection, Recombination, and Mutation, Wiley, Chichester (2000).










\bibitem{durrett} R. Durrett, \textit{Probability Models for DNA Sequence Evolution}, 2nd ed., Springer, New York (2008).



\bibitem{griffithsmarjoram96} R.C.~Griffiths and P.~Marjoram, Ancestral inference from samples of DNA sequences with recombination, \textit{J. Comput. Biol.} \textbf{3} (1996), 479--502.


\bibitem{griffithsmarjoram97} R.~C.~Griffiths and P.~Marjoram, An ancestral recombination graph. in: \textit{Progress in Population Genetics and Human Evolution}, P.~Donnelly and S.~Tavar\'{e} (eds.), Springer, New York (1997), 257--270.

\bibitem{EundK} S.\,N.~Ethier and T.\,G.~Kurtz, \textit{Markov Processes: Characterization and Convergence}, Wiley, Hoboken (1986), reprint (2005).

\bibitem{EvansBLA}
S.N.~Evans, Coalescing Markov labelled partitions and a continuous sites genetics
model with infinitely many types, \textit{Ann. Inst. H. Poincar´e Probab. Statist.} \textbf{33} (1997),
339--358.


\bibitem{HaleRingwood}
D.~McHale and G.A.~Ringwood,
Haldane linearisation of baric algebras,
\textit{ J.\ London\ Math.\ Soc.}  
\textbf{28} (1983), 17--26.



\bibitem{hudson} R.R.~Hudson, Properties of a neutral allele model with intragenic recombination, \textit{Theor.~Popul.~Biol.} \textbf{23} (1983), 183--201.

\bibitem{kurtjansen} S. Jansen and N. Kurt, On the notion(s) of duality for Markov processes, \textit{Probab. Surveys} \textbf{11} (2014), 59--120; 

\bibitem{JenkinsFearnheadSong}
P.A.~Jenkins, P.~Fearnhead, and Y.S.~Song,
Tractable diffusion and coalescent processes for weakly correlated loci,
\textit{Electron.~J.~Probab.} \textbf{20} (2015), 1--26; \texttt{arXiv:1405.6863}

\bibitem{Jennings} 
H.S.~Jennings, 
The numerical results of diverse systems of breeding, with respect to
two pairs of characters, linked or independent, with special relation
to the effects of linkage,  
\textit{Genetics} \textbf{2} (1917), 97--154.






\bibitem{KroneNeuhauser97}
S.M.~Krone und C.~Neuhauser, Ancestral processes with selection, \textit{Theor.~Popul.~Biol.} \textbf{35} (1997), 210--237.



\bibitem{LambertSchertzer}
A.~Lambert, V.~Mir\'{o} Pina, and E.~Schertzer,
Chromosome painting: how recombination mixes ancestral colors, \textit{Ann.\ Appl.\ Probab., in press}; \texttt{arXiv:1807.09116}. 



\bibitem{liggett} T.M. Liggett, \textit{Continuous Time Markov Processes: an Introduction.}, Amer.~Math.~Soc., Providence, RI (2010). 



\bibitem{Lyubich}
Y.I.~Lyubich,
\textit{Mathematical Structures in Population Genetics},
Springer, Berlin (1992).







\bibitem{Moehle99}
M.~Möhle, The concept of duality and applications to Markov processes arising in neutral population genetics models, \textit{Bernoulli} \textbf{5} (1999), 761--777.




\bibitem{Robbins}
R.B.~Robbins,
Some applications of mathematics to breeding problems III,
\textit{Genetics} \textbf{3} (1918), 375--389.

\bibitem{ThompsonMcBride74}
C.J.~Thompson and J.L.~McBride, On Eigen’s theory of the self-organization of matter and the
evolution of biological macromolecules, \text{Math.~Biosci.} \textbf{21} (1974) 127--142.






\end{thebibliography}
\end{document}